\newtheorem{theo}{Theorem}[section]
\newtheorem{ex}[theo]{Example}
\newtheorem{prop}[theo]{Proposition}
\newtheorem{lem}[theo]{Lemma}
\newtheorem{cor}[theo]{Corollary}
\newtheorem{question}[theo]{Question}
\newtheorem{rema}[theo]{Remark}
\def \kbar {{\bar k}}
\def \dem {\paragraph{ \em Proof. }}
\def \Romannumeral #1 {\expandafter\uppercase\expandafter {\romannumeral #1} }
\def \br {{\rm{Br \,}}}
\def \pic {{\rm {Pic\,}}}
\def \pico {{\rm {Pic^0\,}}}
\def \Div {{\rm{Div\,}}}
\def \gal {{\rm{Gal\,}}}
\def \calo {{\mathcal O}}
\def \T {{\mathcal T}}
\def \M {{\mathcal M}}
\def \N {{\mathcal N}}
\def \cala {{\mathcal A}}
\def \calf {{\mathcal F}}
\def \calg {{\mathcal G}}
\def \spec {{\rm{Spec\,}}}
\def\ov{\overline}
\def \Z {{\bf Z}}
\def \Q {{\bf Q}}
\def \ker {{\rm {Ker}}}
\def \G {{\bf G}_m}
\def\smallsquare{\vbox{\hrule\hbox{\vrule height 1 ex\kern 1 ex\vrule}\hrule}}
\def\enddem{\hfill \smallsquare\vskip 3mm}
\def \abstract{\paragraph{Abstract. }}
\title{On Tate--Shafarevich groups of one-dimensional families of commutative group schemes  over number fields}
\author{David Harari and Tam\'as Szamuely}
\address{Laboratoire de Math\'ematiques d'Orsay, Univ. Paris-Sud, CNRS,
Universit\'e Paris-Saclay, 91405 Orsay, France}
\email{David.Harari@math.u-psud.fr}
\address{Dipartimento di Matematica, Universit\`a di Pisa, Largo Bruno Pontecorvo 5, 56127 Pisa, Italy}
\email{tamas.szamuely@unipi.it}
\date{\today}
\DeclareFontFamily{U}{wncy}{}
\DeclareFontShape{U}{wncy}{m}{n}{%
   <5>wncyr5%
   <6>wncyr6%
   <7>wncyr7%
   <8>wncyr8%
   <9>wncyr9%
   <10>wncyr10%
   <11>wncyr10%
   <12>wncyr6%
   <14>wncyr7%
   <17>wncyr8%
   <20>wncyr10%
   <25>wncyr10}{}
\DeclareMathAlphabet{\cyrille}{U}{wncy}{m}{n}
\def\Sha{\cyrille X}
\def \R{{\bf R}}
\DeclareRobustCommand\widecheck[1]{{\mathpalette\@widecheck{#1}}}
\def\@widecheck#1#2{%
    \setbox\z@\hbox{\m@th$#1#2$}%
    \setbox\tw@\hbox{\m@th$#1%
       \widehat{%
          \vrule\@width\z@\@height\ht\z@
          \vrule\@height\z@\@width\wd\z@}$}%
    \dp\tw@-\ht\z@
    \@tempdima\ht\z@ \advance\@tempdima2\ht\tw@ \divide\@tempdima\thr@@
    \setbox\tw@\hbox{%
       \raise\@tempdima\hbox{\scalebox{1}[-1]{\lower\@tempdima\box
\tw@}}}%
    {\ooalign{\box\tw@ \cr \box\z@}}}
\begin{document}
\maketitle

\noindent{\small{\sc Abstract.} Given a smooth geometrically connected curve $C$ over a field $k$ and a smooth commutative group scheme $G$ of finite type over the function field $K$ of $C$ we study the Tate--Shafarevich groups $\Sha^1_C(G)$ given by elements of $H^1(K,G)$ locally trivial at completions of $K$ associated with closed points of $C$. When $G$ comes from a $k$-group scheme and $k$ is a number field (or $k$ is a finitely generated field and $C$ has a $k$-point) we prove finiteness of $\Sha^1_C(G)$ generalizing a result of Sa\"idi and Tamagawa for abelian varieties. We also give examples of nontrivial $\Sha^1_C(G)$ in the case when $G$ is a torus and prove other related statements.}

\section{Introduction}

Consider a smooth, proper, geometrically connected curve $X$ over a field $k$ and denote by $K$ its function field. Given a nonempty open subscheme $C\subset X$, denote by $C^{(1)}$ the set of closed points of $C$ and by $K_c$ the completion of $K$ with respect to the discrete valuation associated with a point $c\in C^{(1)}$.

Now assume $G$ is a commutative group scheme over $K$. We define the Tate--Shafarevich groups of $G$ relative to $C$ by
$$\Sha^i_C(G):=\ker (H^i(K,G) \to \prod_{c \in C^{(1)}} H^i(K_c,G))$$
for each $i\geq 0$.

The case where $i=1$ and $G=A$ is an abelian variety has been thoroughly studied in the beautiful paper \cite{saiditag} by Sa\"\i di and Tamagawa. One of their main unconditional results is the finiteness of  $\Sha^1_C(A)$ for an isotrivial abelian variety $A$ and a base field $k$ finitely generated over $\Q$.

Our main result extends (a slightly weaker form of) their theorem to arbitrary commutative group schemes of finite type.

\begin{theo}\label{main} Let $G_k$ be a commutative group scheme of finite type over $k$, and set $G:=G_k \times_k K$. The group $\Sha^1_C(G)$ is finite in each of the following cases:
\begin{enumerate}
  \item[(a)] $k$ is a number field;
  \item[(b)] $k$ is finitely generated over $\Q$ and $C$ has a rational point.
\end{enumerate}
\end{theo}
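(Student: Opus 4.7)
My plan is to argue by dévissage to the building blocks of commutative $k$-group schemes in characteristic zero. By the Barsotti--Chevalley structure theorem, together with the fact that $G_k/G_k^0$ is finite étale, the problem reduces to showing finiteness of $\Sha^1_C$ in the four cases where $G_k$ is commutative unipotent, a torus, an abelian variety, or a finite étale group scheme. This requires a dévissage lemma: for a short exact sequence $0\to G_1\to G_2\to G_3\to 0$ of commutative $k$-group schemes of finite type, finiteness of $\Sha^1_C$ for $G_1\times_k K$ and $G_3\times_k K$ implies it for $G_2\times_k K$. The subtle point is that the induced three-term sequence of Sha groups is not automatically exact: the kernel of $\Sha^1_C(G_2)\to\Sha^1_C(G_3)$ consists of classes coming from $H^1(K,G_1)$ whose local images land in $\im(G_3(K_c)\to H^1(K_c,G_1))$ at every $c$, and one must show that this intermediate set is finite modulo $\Sha^1_C(G_1)$. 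The unipotent case is then immediate: in characteristic zero every commutative unipotent group is an iterated extension of ${\bf G}_a$ and $H^1(K,{\bf G}_a)=0$.

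For the abelian variety case, $A_k\times_k K$ is constant, hence isotrivial, and the Sa\"\i di--Tamagawa theorem quoted in the introduction delivers the conclusion. For the finite étale case, the Hochschild--Serre spectral sequence for $\bar C\to C$ reduces the problem to controlling (i) the image in $H^1(\bar C_{\bar k},F)$, which is finite since $\bar C_{\bar k}$ is a curve over an algebraically closed field and $F$ is finite, and (ii) the contribution from $H^1(k,F)$. Under local triviality a class from $H^1(k,F)$ must vanish on the residue field $\ka(c)$ of every $c\in C^{(1)}$; in case (b) this is forced at once by restricting to the rational point, while in case (a) a Chebotarev-style density argument (relying on the fact that $\bigcap_{c}\ka(c)=k$, since $k$ is algebraically closed in $K$) kills the contribution. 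One may alternatively reduce case (a) to case (b) directly by extending scalars to $k':=\ka(c_0)$ for some $c_0\in C^{(1)}$, so that $C_{k'}$ acquires a rational point; the kernel of $\Sha^1_C(G)\to\Sha^1_{C_{k'}}(G_{Kk'})$ is killed by $[k':k]$ and its finiteness can be recovered by applying the same four-case analysis to $G[n]$.

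The main obstacle, and the technical heart of the proof, is the torus case. For a $k$-torus $T_k$ I would choose a quasi-trivial resolution $1\to S\to P\to T\to 1$ with $P=R_{L/k}\G$. Since $k$ is algebraically closed in $K$, the ring $L\otimes_k K$ is a field; Shapiro combined with Hilbert~90 then gives $H^1(K,P)=0$, and similarly $H^1(K_c,P)=0$ because $L\otimes_k K_c$ is a finite product of fields. The connecting homomorphism therefore embeds $\Sha^1_C(T)$ into $\Sha^2_C(S)$, reducing matters to the finiteness of $\Sha^2_C$ of a constant torus. A further quasi-trivial resolution of $S$ in turn reduces the problem to bounding
\[
\Sha^2_{C_L}(\G) \;=\; \ker\!\Bigl(\br(K_L)\to\prod_{c'\in C_L^{(1)}}\br((K_L)_{c'})\Bigr),
\]
where $C_L:=C\times_k L$ has function field $K_L=L\otimes_k K$. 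Controlling this kernel is the arithmetic core: in case (a) I would appeal to Poitou--Tate-style duality theorems for one-parameter families over number fields (in the spirit of the authors' earlier work), which dualize $\Sha^i_C$ of a torus against a Sha-type group of its character lattice, while in case (b) the rational point of $C$ provides an evaluation splitting the inclusion $\br(k)\hookrightarrow\br(K_L)$, reducing the obstruction to a geometric Brauer class where finiteness is available from standard results on Brauer groups of smooth curves. This duality input is the step I expect to be the most delicate.
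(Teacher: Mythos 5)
Your reduction steps (Chevalley d\'evissage, killing the unipotent part via $H^1(K,{\bf G}_a)=0$, quoting Sa\"\i di--Tamagawa for the constant abelian variety) match the paper, but the torus case --- which you correctly identify as the heart of the matter --- is where the proposal breaks down. Embedding $\Sha^1_C(T)$ into $\Sha^2_C(S)$ via a quasi-trivial resolution and then trying to reduce to $\Sha^2_{C_L}(\G)$ leads to a group whose finiteness is \emph{equivalent to an open problem}: the paper proves (Proposition \ref{keyprop} and Theorem \ref{h2theo}) that $\Sha^2_C(\G)\cong\br_{\rm lt}(X)\cong\Sha(J)$ for the Jacobian $J$ of $X$, and finiteness of $\Sha(J)$ is of course not known. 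Only the $m$-torsion of $\Sha^2_C(\G)$ is finite (Corollary 4.2 of the paper), and even granting that, your ``further quasi-trivial resolution'' of $S$ reintroduces the image of $H^1(K,S'')$ for an auxiliary torus $S''$ subject to a local condition weaker than lying in $\Sha^1_C(S'')$ --- exactly the non-exactness problem you flagged in your d\'evissage lemma --- so the reduction is circular except when $S''$ can be taken quasi-trivial, i.e.\ for stably rational tori (the paper's Corollary \ref{propct}, which is explicitly \emph{not} the general case). The ``Poitou--Tate-style duality for one-parameter families over number fields'' you invoke for case (a) does not exist for this local condition: the paper's introduction points out that the locality here (only at closed points of $C$, not at all divisorial valuations of a two-dimensional model) is too weak for the known duality/finiteness machinery.

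The d\'evissage lemma itself (finiteness for $G_1$ and $G_3$ implies finiteness for $G_2$) is stated but never proved, and you acknowledge the obstruction without resolving it. The paper sidesteps it entirely by a different mechanism that your proposal misses: spread $G$ out to $\calg=G_k\times_kC$, show that $\Sha^1_C(G)$ is contained in the image in $H^1(K,G)$ of ${\mathcal K}^1(C,\calg)=\ker(H^1(C,\calg)\to\prod_c H^1(k(c),\calg_c))$, and prove that this latter group is \emph{finitely generated}. Since $H^1(K,G)$ is torsion, the image of a finitely generated group in it is finite --- this is the key trick. Finite generation of ${\mathcal K}^1(C,\calg)$ is obtained after passing to a splitting field $k_1$: the $\G$-part contributes $H^1(D,\G)\cong\pic D$, finitely generated by Mordell--Weil applied to the compactification of $D$; the abelian part contributes a subgroup of $\Sha^1_D$ of a constant abelian variety, finite by Sa\"\i di--Tamagawa; and the purely arithmetic contribution from $H^1(k_1,G_{k_1})$ is controlled by showing (Lemmas \ref{lemmt} and \ref{pt}) that classes vanishing on all residue fields $k(c)$ lie in $\Sha^S(G_{k_1})$ for a suitable finite $S$ (via Lang--Weil and Hensel), a group with finite $m$-torsion by arithmetic duality over number fields --- note that this contribution is finite but not necessarily zero, so your suggestion that a density argument ``kills'' it is also not quite right, although for finite \'etale $F$ a Hilbert-irreducibility argument does show $\Sha^1_C(F)=0$ (the paper's Proposition \ref{h1prop}); the mere fact that $\bigcap_c k(c)=k$ would not suffice.
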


The proof will be given in the next section. Of course, our main new contribution is the case of groups of multiplicative type. In this respect we ask:

\begin{question}\label{mainq} For $k$ a finitely generated extension of $\Q$ and $G$ a group of multiplicative type over $K$ is the group $\Sha^1_C(G)$ always finite?
\end{question}

Note that in (\cite{saiditag}, Proposition A) Sa\"\i di and Tamagawa prove that for $G=A$ an abelian variety over $K$ the $N$-torsion subgroup of $\Sha^1_C(A)$ is finite for every $N>0$. The analogous statement for groups of multiplicative type would of course be equivalent to a positive answer to the question above. For the moment we know that the answer is positive for $G$ constant (by the theorem above), finite (Proposition \ref{h1prop} below) or a stably $K$-rational torus (Corollary \ref{propct}, pointed out by J-L. Colliot-Th\'el\`ene).

An even more basic question is whether this group is always 0. In Section 3 we show:

\begin{prop}\label{example}
There exist examples of smooth proper curves $X$ defined over number fields and tori $T$ defined over their function fields with $\Sha^1_X(T)\neq 0$.
\end{prop}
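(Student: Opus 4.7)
The plan is to reduce to finding a non-trivial class in $\Sha^2_X(\G)$ and then to exhibit one via an elliptic curve with non-trivial Tate--Shafarevich group.

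First, for a cyclic Galois extension $L/K$ of prime degree $n$ with defining character $\chi\in H^1(K,\Z/n)$, I take the norm-one torus $T=R^{(1)}_{L/K}\G$. By Hilbert 90 one has $H^1(K,T)\cong K^*/N_{L/K}L^*$, and cup product with $\chi$ produces the cyclic algebra injection $K^*/N_{L/K}L^*\hookrightarrow \br K[n]$ whose image is the relative Brauer group $\br(L/K)$. Since $\alpha\in K_c^*$ is a norm from $L_c$ if and only if $(L_c/K_c,\chi,\alpha)$ vanishes in $\br K_c$, this injection respects localisations, and hence induces an embedding $\Sha^1_X(T)\hookrightarrow\Sha^2_X(\G)\cap\br(L/K)$. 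It therefore suffices to exhibit a curve $X$ with $\Sha^2_X(\G)\ne 0$ containing a class split by some cyclic extension of $K$.

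For this I take $X=E$ an elliptic curve over a conveniently chosen number field $k$ (totally imaginary, say, so that $H^3(k,\G)$ vanishes) with $\Sha(E/k)\ne 0$; such examples are classical, going back to Reichardt, Lind and Selmer. The Hochschild--Serre spectral sequence for $E\to\spec k$, combined with $\br\bar E=0$ (Tsen, using that a curve over an algebraically closed field is $C_1$) and the splitting of $\br E$ provided by the origin $o\in E(k)$, yields an isomorphism $\br E/\br k\cong H^1(k,\pico(\bar E))=H^1(k,E)$. I pick $\xi\in\Sha(E/k)\setminus\{0\}$ and lift it to $\beta\in\br E\subset\br K$ normalised by $o^*\beta=0$. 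The key compatibility is that for every closed point $c\in E^{(1)}$ with residue field $\kappa(c)$, functoriality of the edge map identifies the specialisation $c^*\beta\in\br\kappa(c)$ with the value at $(\xi|_{\kappa(c)},c)$ of the classical evaluation pairing $H^1(\kappa(c),E_{\kappa(c)})\times E(\kappa(c))\to\br\kappa(c)$.

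The local triviality then follows cleanly: since every place of $\kappa(c)$ sits above a place of $k$ where $\xi$ is locally trivial, $\xi|_{\kappa(c)}\in\Sha(E_{\kappa(c)}/\kappa(c))$, and hence $c^*\beta$ is a Brauer class over the number field $\kappa(c)$ with trivial localisation at every place, which therefore vanishes by the Brauer--Hasse--Noether theorem. Thus $\beta\in\Sha^2_X(\G)\setminus\{0\}$. To finish, I choose a cyclic extension $L/K$ splitting $\beta$; in the favourable case where $\xi$ has order two and $\beta$ may be arranged to be a single quaternion algebra $(a,b)_K$, one can simply take $L=K(\sqrt{a})$, and the preimage of $\beta$ in $K^*/N_{L/K}L^*$ gives the desired non-zero element of $\Sha^1_X(R^{(1)}_{L/K}\G)$. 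The main obstacle is the compatibility in the second paragraph between the Hochschild--Serre specialisation $c^*\beta$ and the classical evaluation pairing, which requires careful diagram-chasing through the Picard/Brauer identification; a secondary concern is ensuring that $\beta$ is split by a cyclic (not merely finite Galois) extension of $K$, which one can circumvent by arranging $\beta$ to be of index two (whence it is automatically cyclic).
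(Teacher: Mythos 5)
Your overall strategy coincides with the paper's: pass to the norm-one torus $T=\R^1_{L/K}\G$ of a cyclic extension, identify $\Sha^1_X(T)$ with $\Sha^2_X(\G)\cap\br(L/K)$ via Hilbert 90 and the periodicity of cyclic group cohomology, use the Hochschild--Serre sequence for $X=E$ (split by the origin) to lift a nonzero $\xi\in\Sha(E)$ to a class $\beta\in\br E$ lying in $\Sha^2_X(\G)$, and finally split $\beta$ by a cyclic extension. The verification that $\beta\in\Sha^2_X(\G)$ is correct but takes a longer route than necessary: instead of invoking the compatibility of $c^*\beta$ with the classical evaluation pairing (true, but a nontrivial Manin-type compatibility that you yourself flag as the main obstacle), it suffices to run the same normalized Hochschild--Serre diagram over each completion $k_v$ to conclude $\beta|_{E\times_kk_v}=0$ (since $\xi|_{k_v}=0$ and $o^*\beta=0$), and then evaluate at closed points and apply the Hasse principle for the Brauer groups of the residue fields $k(c)$; this is exactly the first half of Proposition \ref{keyprop}.

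The genuine gap is in the last step. You need a \emph{cyclic} extension $L|K$ splitting $\beta$, and you propose to ``arrange $\beta$ to be of index two'' so that it is a single quaternion algebra. Nothing in the construction controls the index of $\beta$ over the two-dimensional field $K=k(E)$: the period--index problem for function fields of curves over number fields is not settled, and a class of period $2$ there is not known to have index $2$ in general, so this arrangement is not available as stated. The fix --- and this is what the paper does --- is to take $L$ to be a \emph{constant} cyclic extension $Kk'$, where $k'|k$ is a cyclic extension of number fields over which the torsor representing $\xi$ acquires a rational point; then $\xi|_{k'}=0$ forces $\beta|_{E\times_kk'}=0$ by the same normalized Hochschild--Serre argument, hence $\beta\in\br(L/K)$ with $L|K$ cyclic. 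Concretely the paper uses the Selmer curve, whose class in $\Sha(E)$ dies over a cyclic sextic extension of $\Q$; alternatively the Reichardt--Lind curve acquires a point over a quadratic field, which would realize your ``index two'' scenario --- but the cyclicity must be arranged at the level of the torsor over $k$, not of the Brauer class over $K$.
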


We construct both constant and non-constant examples of such tori in Example \ref{exconcrete} below.\smallskip

The last section contains some complements to the results above. For instance, we show that $\Sha^1_C(G)$ is always 0 for a group scheme associated with a finitely generated Galois module over $K$ and that $\Sha^2_C(F)$ is finite for a finite \'etale $K$-group scheme defined over $k$. We also explore a possible strengthening of Question \ref{mainq}. As the reader will see, we deduce Theorem \ref{main} in case (b) from the following stronger \mbox{statement:} if $k$ and $G$ are as in the theorem and $C\subset X$ is an open subcurve having a rational point $c\in C(k)$, then already for the single point $c$ the kernel of the restriction map $H^1(C,G\times_kC)\to H^1(K_c,G)$ has finite image in $H^1(K,G)$. However, in Example \ref{exinfinite} below we shall give an example of a non-constant torus over $K$ where such a kernel is infinite. This shows the limitations of the methods of the present paper and hints at the potential difficulty of Question \ref{mainq} in the non-constant case.\smallskip

We would like to mention that in the recent preprint \cite{rr} A. and I. Rapinchuk prove finiteness of another kind of Tate--Shafarevich group associated with a torus defined over a finitely generated field. In their context local triviality is imposed with respect to all discrete valuations coming from codimension 1 points of a normal model of the function field that is of finite type over $\Z$. Ultimately their result relies on fundamental finiteness theorems in \'etale cohomology. Though in the present paper we only consider function fields of dimension 1, we impose a weaker local triviality condition and such a reduction does not seem to work.  \smallskip

We are grateful to Jean-Louis Colliot-Th\'el\`ene and Olivier Wittenberg for their pertinent remarks and to the referee for their careful reading of the text.
We dedicate this note to the memory of our dear friends Joss Beaumont and Jo Cavalier with whom we had the privilege of discussing similar matters in the past.

\section{Constant commutative group schemes}

In this section we prove Theorem \ref{main} of which we keep the notation and assumptions: $k$ is a finitely generated field over $\Q$, $C$ a smooth geometrically connected $k$-curve with function field $K$, and  $G_k$ a commutative group scheme of finite type over $k$, with base change $G:=G_k \times_k K$ to $K$.

We start with a couple of lemmas, of which the first two will only serve in case (a) of the theorem.

\begin{lem}\label{lemmt}
Assume $k$ is a number field. There exists a finite set $S$ of places of $k$ containing all archimedean places such that the kernel of the map
\begin{equation}\label{shageo}
H^1(k,G_k) \to \prod_{c \in C^{(1)}}
	H^1(k(c), G_k)
\end{equation}
is contained in the kernel $\Sha^S(G_k)$ of the map
\begin{equation}\label{shaar}
H^1(k,G_k) \to \prod_{v \not \in S} H^1(k_v,G_k).
\end{equation}

\end{lem}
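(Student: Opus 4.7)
The key observation is the following: whenever $v$ is a place of $k$ with $C(k_v)\neq\emptyset$, any $k_v$-point of $C$ has image a closed point $c\in C^{(1)}$ together with an embedding $k(c)\hookrightarrow k_v$. Functoriality of $H^1$ then factors the restriction $H^1(k,G_k)\to H^1(k_v,G_k)$ through $H^1(k(c),G_k)$, so any class in the kernel of (\ref{shageo}) automatically dies in $H^1(k_v,G_k)$. The plan is therefore to exhibit a finite set $S$ of places of $k$, containing the archimedean ones, outside of which $C(k_v)$ is nonempty.

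To produce these local points I would first spread out the smooth proper compactification $X$ of $C$ together with its reduced complement $Y:=X\setminus C$ to a smooth proper model $\mathcal{X}$ over $\mathcal{O}_{k,S_0}$ and a proper closed subscheme $\mathcal{Y}\subset\mathcal{X}$ finite \'etale over $\mathcal{O}_{k,S_0}$, for some finite set $S_0$ of places of $k$ containing the archimedean ones. Then $\mathcal{C}:=\mathcal{X}\setminus\mathcal{Y}$ is smooth over $\mathcal{O}_{k,S_0}$. For any $v\notin S_0$ the fibre $\mathcal{X}_v$ is a smooth projective geometrically connected curve of genus $g$ over the residue field $\kappa(v)$ of cardinality $q_v$, and the Weil bound yields $\#\mathcal{X}_v(\kappa(v))\geq q_v+1-2g\sqrt{q_v}$. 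Subtracting the at most $N_Y:=\sum_{y\in Y}[k(y):k]$ closed points removed to obtain $\mathcal{C}_v$, one finds $\#\mathcal{C}_v(\kappa(v))\geq q_v+1-2g\sqrt{q_v}-N_Y$, which is positive once $q_v>M$ for some explicit constant $M$ depending only on $g$ and $N_Y$. Taking $S$ to be $S_0$ together with the finitely many places with $q_v\leq M$ then suffices.

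Finally, since $\mathcal{C}\to\spec\mathcal{O}_{k,S_0}$ is smooth, every $\kappa(v)$-point of $\mathcal{C}_v$ is smooth, so Hensel's lemma lifts it to a point of $\mathcal{C}(\mathcal{O}_v)$, and in particular to a $k_v$-point of $C$. Combined with the initial observation this will conclude the argument. No serious obstacle is expected: the essential input is the Weil bound for smooth projective curves, and the remaining steps are routine spreading-out and Henselian lifting manipulations; the only minor bookkeeping concern is to enlarge the finite set of places finitely many times in a controlled way so that all the ingredients (smoothness of $\mathcal{C}$, properness of $\mathcal{X}$, and the positivity of the above count) hold simultaneously.
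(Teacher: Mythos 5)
Your overall strategy --- produce local points of $C$ outside a finite set of places via the Weil bound, spreading out and Hensel's lemma, then transport local triviality through the residue field of the image point --- is the same as the paper's. But your opening ``key observation'' is false as stated, and it is precisely the point where care is needed. A $k_v$-point of $C$ need \emph{not} have image a closed point of $C$: the completion $k_v$ has infinite transcendence degree over $k$, so a morphism $\spec k_v \to C$ will in general hit the generic point (e.g.\ $t\mapsto \pi$ on $\A^1_{\Q}$ with $\pi\in\Q_p$ transcendental), and then there is no closed point $c$ with $k(c)\hookrightarrow k_v$ through which the restriction $H^1(k,G_k)\to H^1(k_v,G_k)$ could factor. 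Your Hensel lifting produces a point of $\mathcal{C}(\calo_v)$ over the \emph{complete} local ring, and nothing forces its generic fibre to be algebraic over $k$, so the argument does not close.

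The repair is exactly what the paper does: perform the same lifting over the henselisation $\calo_v^h$ rather than over the completion (henselian local rings satisfy the same lifting property for smooth schemes), obtaining a point of $C$ over $k_v^h$. Since $k_v^h$ \emph{is} algebraic over $k$, that point is defined over a finite subextension $k'\subset k_v^h\subset k_v$, and its image in $C$ is then genuinely a closed point $c$ with $k(c)\hookrightarrow k'$. A class killed by all the maps $H^1(k,G_k)\to H^1(k(c),G_k)$ therefore dies in $H^1(k',G_k)$ and a fortiori in $H^1(k_v,G_k)$. (Alternatively, one can invoke Greenberg's approximation theorem to pass from $C(k_v)\neq\emptyset$ to $C(k_v^h)\neq\emptyset$.) The rest of your write-up --- spreading out, the Weil/Lang--Weil estimates, enlarging $S$ finitely many times --- is fine and agrees with the paper's proof.
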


\begin{dem}
We find a finite set $S$ of places of $k$
containing all archimedean places such that the curve $C$
has $k_v ^h$-points for every $v \not \in S$, where $k_v^h$ is the
henselisation of $k$ at $v$. Such an $S$ exists by the Lang-Weil estimates
and Hensel's Lemma. Since
$k_v^h$ is algebraic over $k$, for every $v\not\in S$ the curve $C$ has a point
over a finite extension $k'$ of $k$ such that $k' \subset k_v^h \subset k_v$.
In particular $C$ has a closed point $c$ such that the residue field
$k(c)$ embeds into $k'$, which implies that the restriction of each element of the kernel of (\ref{shageo}) to
$H^1(k', G_k)$ is zero.
\end{dem}

Concerning the group $\Sha^S(G_k)$ we have the following finiteness statement.

\begin{lem}\label{pt} Still assuming $k$ is a number field, suppose $G_k$ is an extension of an abelian variety $A_k$ by a group of multiplicative type $N_k$.
If $S$ is a finite set of places of $k$ containing all archimedean places, the group $$\Sha^S(G_k)=\ker [H^1(k,G_k) \to \prod_{v \not \in S} H^1(k_v,G_k)]$$ has finite $m$-torsion for all $m>0$.
\end{lem}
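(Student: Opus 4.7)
The plan is to combine a Kummer-style argument with the d\'evissage coming from the Chevalley extension $0\to N_k\to G_k\to A_k\to 0$.

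In characteristic zero multiplication by $m$ is surjective on $A_k$ and has finite cokernel on $N_k$, hence also on $G_k$. Replacing $m$ by a suitable multiple -- harmless since $\Sha^S(G_k)[m]\subseteq \Sha^S(G_k)[m']$ when $m\mid m'$ -- we may assume $m\colon G_k\to G_k$ is surjective as an fppf sheaf. The Kummer sequence
\[
0\to G_k(k)/m G_k(k)\to H^1(k,G_k[m])\to H^1(k,G_k)[m]\to 0
\]
then identifies $\Sha^S(G_k)[m]$ with the quotient of the $S$-Selmer-type subgroup
\[
\mathrm{Sel}^S_m(G_k):=\{\xi\in H^1(k,G_k[m]):\xi|_{k_v}\in\im(G_k(k_v)/m)\text{ for all }v\notin S\}
\]
by the image of $G_k(k)/m$.

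From the long exact cohomology sequences associated with $0\to N_k\to G_k\to A_k\to 0$ over $k$ and over each $k_v$, a diagram chase yields an exact sequence
\[
\Sha^S(N_k)[m]\to \Sha^S(G_k)[m]\to \Sha^S(A_k)[m]
\]
up to bounded error terms coming from $A_k(k)/m$ (finite by Mordell--Weil) and from $A_k(k_v)/m$ for the finitely many $v\in S$ (finite for each such $v$). So it suffices to show the outer two Sha groups have finite $m$-torsion. Finiteness of $\Sha^S(A_k)[m]$ is the classical Selmer argument for abelian varieties: after enlarging $S$ to include all places of bad reduction for $A_k$ and all divisors of $m$, the map $A_k(k_v)/m\to H^1(k_v,A_k[m])$ lands in the unramified cohomology for $v\notin S$ (using $A_k(k_v)=\cala(\calo_v)$ by properness), whence the relevant Selmer group is contained in the finite group $H^1(k_S/k,A_k[m])$ by Hermite--Minkowski.

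For the multiplicative-type piece, a parallel Kummer analysis applies to $N_k$, but the image of $N_k(k_v)/m$ in $H^1(k_v,N_k[m])$ is not automatically unramified (valuation classes produce ramified contributions in $H^1(k_v,\mu_m)$). To handle this one invokes Sansuc's finiteness of $\Sha^1(T)$ for a torus $T$ (via Tate--Nakayama duality applied to the character module $\hat N_k$ and the Poitou--Tate nine-term sequence), together with the Hermite--Minkowski finiteness of the cohomology of $N_k[m]$ restricted to $k_S$, to conclude that $\Sha^S(N_k)[m]$ is finite. The main obstacle is making the d\'evissage exact sequence above precise, and in particular controlling the error terms arising from the difference between local and global points on $A_k$; once this book-keeping is in place, finiteness of $\Sha^S(G_k)[m]$ follows from finiteness of the two outer pieces.
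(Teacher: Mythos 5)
Your strategy---d\'evissage along $0\to N_k\to G_k\to A_k\to 0$ combined with Selmer-type arguments in Galois cohomology---is genuinely different from the paper's, which instead reduces to the full group $\Sha(G_k)$, spreads $G_k$ out to a group scheme $G_U$ over an open $U\subset\spec\calo_k$, and controls ${}_m\Sha(G_k)$ inside the group $D^1(U,G_U)$ using the connected--\'etale d\'evissage $G_U^\circ\subset G_U$ and the finiteness of ${}_mH^1(U,G_U^\circ)$ quoted from their earlier work on $1$-motives. Your route could probably be pushed through, but as written it has two genuine gaps. First, you cannot make multiplication by $m$ surjective on $G_k$ by passing to a multiple of $m$: a group of multiplicative type may have a nontrivial finite part, and if $\mu_2\subset G_k$ and $m=2$ then multiplication by any multiple of $2$ kills $\mu_2$, so the clean Kummer sequence you write down does not exist. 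This is repairable (work with $0\to G[m]\to G\to mG\to 0$ and $0\to mG\to G\to G/mG\to 0$ and absorb the finite cokernel), but it has to be done.

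Second, and more seriously, the d\'evissage exact sequence is misstated, and the error terms you name are not where the difficulty lies. If $\alpha\in\Sha^S(G_k)[m]$ dies in $H^1(k,A_k)$ and $\beta\in H^1(k,N_k)$ lifts it, then for $v\notin S$ the local class $\beta_v$ is not zero: it only lies in the image of the connecting map $A_k(k_v)\to H^1(k_v,N_k)$. These images are in general nontrivial at infinitely many $v\notin S$ (already for $N_k$ with a finite part, since $H^1(\calo_v,\mathcal N)$ need not vanish), so they cannot be treated as a bounded error supported on $S\cup\{\infty\}$. The group you actually have to control is
$\ker\bigl(H^1(k,N_k)\to\prod_{v\notin S}H^1(k_v,N_k)/{\rm Im}\,A_k(k_v)\bigr)$,
and proving that its torsion of bounded exponent is finite---using that ${\rm Im}\,A_k(k_v)$ is unramified for almost all $v$ because $A_k(k_v)=\cala(\calo_v)$ for an abelian scheme model, together with finiteness of the $m$-torsion of $H^1$ of an integral model of $N_k$ over $\spec\calo_{k,S}$---is precisely the content of the lemma; it is the step the paper performs via $D^1(U,G_U)$ and ${}_mH^1(U,G_U^\circ)$. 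Dismissing this as book-keeping leaves the proof incomplete at its central point.
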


\begin{proof}
First consider a place $v\in S$. In the exact sequence
$$
H^1(k_v,N_k)\to H^1(k_v,G_k)\to H^1(k_v,A_k)
$$
the group $H^1(k_v,N_k)$ is finite (see \cite{adt}, I.2.4 and I.2.13) and $H^1(k_v,A_k)$ has finite $m$-torsion (see \cite{adt}, I.3.4 and I.3.7). Therefore $H^1(k_v,G_k)$ has finite $m$-torsion. Denoting by $\Omega$ the set of all places of $k$ and setting
\begin{equation}\label{shaclassic}\Sha(G_k)=\ker [H^1(k,G_k) \to \prod_{v\in\Omega} H^1(k_v,G_k)],\end{equation}
the exact sequence of $m$-torsion subgroups
$$0 \to {}_m\Sha(G_k) \to {}_m\Sha^S(G_k) \to \prod_{v \in S}
{}_m H^1(k_v,G_k)$$
shows that it is enough to prove finiteness of ${}_m\Sha(G_k)$.

Fix an open subscheme $U\subset\spec\calo_k$ such that $m$ is invertible on $U$ and moreover  $G_k$ extends to a smooth commutative group scheme $G_U$ over $U$ which is again an extension of an abelian scheme by a group of multiplicative type over $U$. Consider the group
$$
D^1(U,G_U):={\rm Im}(H^1_c(U, G_U)\to H^1(U, G_U))=\ker(H^1(U, G_U)\to\bigoplus_{v\notin U}H^1(k_v, G_k))
$$
where $H^1_c(U, G_U)$ denotes the compact support cohomology group defined, for instance, in (\cite{adt}, \S II.2).
We first contend that the natural map ${}_mD^1(U,G_U)\to {}_mH^1(k, G_k)$ is injective. To see this it will be enough to verify injectivity of the analogous maps $D^1(U,G_U)\{\ell\}\to H^1(k, G_k)\{\ell\}$ on $\ell$-primary torsion subgroups for all prime divisors $\ell$ of $m$. Observe that the connected component $G^\circ_U$ of the identity in $G_U$ is a semiabelian scheme and the component group $F_U=G_U/G^\circ_U$ is a finite \'etale group scheme. Now consider the commutative diagram with exact rows
\begin{equation}\label{devisGU}
\begin{CD}
H^0(U,F_U) @>>> H^1(U, G_U^\circ) @>>> H^1(U, G_U) @>>> H^1(U,F_U) \\
@VV{\cong}V @VVV @VVV @VVV \\
H^0(k,F_k) @>>> H^1(k, G_k^\circ) @>>> H^1(k, G_k) @>>> H^1(k,F_k).
\end{CD}
\end{equation}
Here the first vertical map is an isomorphism by properness of $F_U$ and the last one is injective by properness of $F_U$ because a generically trivial $F_U$-torsor is trivial by the valuative criterion of properness. The group $H^1(U, G_U^\circ)$ is torsion by (\cite{hasza}, Lemma 3.2 (1)), and its $\ell$-primary torsion injects in $H^1(k, G_k^\circ)$ by (\cite{hasza}, Proposition 4.1 (2)). Injectivity of the third vertical map on $\ell$-primary torsion follows by a diagram chase.

This being said, we observe that the subgroup ${}_m\Sha(G_k)\subset H^1(k, G_k)$ is contained in ${}_mD^1(U,G_U)$. Recall the argument from \cite{hasza}: each $m$-torsion element in $\Sha(G_k)$ comes from ${}_mD^1(V,G_V)$ for a suitable open subscheme $V\subset U$, hence from ${}_mD^1(U,G_U)$ by covariant functoriality of the groups $D^1(U,G_U)$ for open immersions (see the proof of Lemma 4.7 in \cite{hasza}).
It thus remains to establish the finiteness of ${}_mD^1(U,G_U)$. In the exact upper row of diagram (\ref{devisGU})
the groups $H^i(U,F_U)$ are finite for $i=0,1$ (for the case $i=1$ see \cite{adt}, remark after Theorem 3.1 $b)$) and the groups ${}_mH^1(U,G^\circ_U)$ are known to be finite for all $m>0$ by (\cite{hasza}, Lemma 3.2 (2)). The required finiteness follows since $D^1(U,G_U)$ is a subgroup of $H^1(U,G_U)$.
\end{proof}

Finally we include for the sake of reference a well-known lemma that is in fact valid over an arbitrary field $k$ (or even for an arbitrary integral Dedekind scheme in place of $C$).

\begin{lem}\label{harder}
For $C'\subset C$ an open subscheme and $\calg$ a smooth commutative group scheme of finite type over $C$ with generic fibre $G$ we have an inclusion of subgroups
$$
{\rm Ker}(H^1(C',\calg)\to\bigoplus_{v\in C\setminus C'}H^1(K_v,G))\subset {\rm Im}(H^1(C, \calg)\to H^1(C',\calg))
$$
in $H^1(C',\calg)$.
\end{lem}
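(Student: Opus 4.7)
The plan is to apply the étale localization (Gysin) exact sequence to the pair $(C, Z)$ with $Z := C \setminus C'$:
$$
H^1(C, \calg) \to H^1(C', \calg) \xrightarrow{\partial} \bigoplus_{v \in Z} H^2_v(C, \calg).
$$
The desired inclusion is equivalent to the vanishing $\partial(\alpha) = 0$ for every $\alpha$ in the kernel $\ker(H^1(C',\calg)\to\bigoplus_v H^1(K_v,G))$; that is, it suffices to check $\partial_v(\alpha) = 0$ for each closed point $v \in Z$.

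For each such $v$, étale excision identifies $H^2_v(C, \calg)$ with $H^2_v(\spec \calo^h_{C,v}, \calg)$, where $\calo^h_{C,v}$ is the henselization of the local ring at $v$ with fraction field $K^h_v$. Compatibility of the global Gysin sequence with the local one for $\spec\calo^h_{C,v}$ factors $\partial_v$ as
$$
H^1(C', \calg) \xrightarrow{\text{restr.}} H^1(K^h_v, \calg) \xrightarrow{\delta_v} H^2_v(\spec \calo^h_{C,v}, \calg),
$$
so the problem reduces to showing that $\alpha|_{K^h_v} = 0$ in $H^1(K^h_v, \calg)$.

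The crucial (and essentially only nontrivial) step is that the natural map $H^1(K^h_v, \calg) \to H^1(K_v, \calg)$ is an isomorphism. This rests on the classical fact that $K^h_v$ is separably closed in its completion $K_v$: were some $\beta \in K_v$ to generate a proper finite separable extension $E$ of $K^h_v$, the henselianness of $K^h_v$ would force $E \otimes_{K^h_v} K_v$ to be a field of degree $[E:K^h_v] > 1$ over $K_v$, yet the inclusion $E \subset K_v$ would make it surject onto $K_v$, a contradiction. Combined with density of $K^h_v$ in $K_v$ and Krasner's lemma, this yields a canonical isomorphism of absolute Galois groups identifying the Galois modules $\calg(\overline{K^h_v})$ and $\calg(\overline{K_v})$. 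The hypothesis that $\alpha$ vanishes in $H^1(K_v, G)$ then forces $\alpha|_{K^h_v} = 0$, so $\partial_v(\alpha) = 0$ for every $v$. No step presents a genuine obstacle: the argument is a formal consequence of standard localization sequences together with the Galois-theoretic identification just described.
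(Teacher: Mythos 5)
Your overall route is exactly the paper's: the localization sequence for the pair $(C,\,C\setminus C')$, excision to replace $H^2_v(C,\calg)$ by $H^2_v(\calo_v^h,\calg)$, the second localization sequence over the henselian local ring, and the reduction to comparing $H^1(K_v^h,G)$ with $H^1(K_v,G)$. The gap is in your justification of that comparison, which you yourself flag as the only nontrivial step. It is true that $K_v^h$ is separably closed in $K_v$ and that density plus Krasner's lemma give a canonical isomorphism of absolute Galois groups. But this does \emph{not} ``identify the Galois modules $\calg(\overline{K_v^h})$ and $\calg(\overline{K_v})$'': for positive-dimensional $G$ (already for $G=\G$ or an abelian variety) the inclusion $G(\overline{K_v^h})\subset G(\overline{K_v})$ is strict, and an isomorphism of Galois groups with a strict inclusion of coefficient modules yields neither surjectivity nor injectivity of the induced map on $H^1$ --- a cocycle valued in $G(\overline{K_v^h})$ could a priori be cobounded only by an element of the larger group. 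Your argument would be complete as written only for group schemes whose geometric points do not grow under the extension, e.g.\ finite \'etale ones.

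The correct input --- and the one behind the reference the paper cites (\cite{hasza}, Lemma 2.7) --- uses smoothness of $G$ together with Greenberg/Artin approximation over the excellent henselian discrete valuation ring $\calo_v^h$: a $G$-torsor over $K_v^h$ is a smooth $K_v^h$-variety, so if it acquires a $K_v$-point it already has a $K_v^h$-point, giving injectivity of $H^1(K_v^h,G)\to H^1(K_v,G)$. Injectivity is all your argument (and the paper's) actually needs, since the hypothesis $\alpha|_{K_v}=0$ is only used to force $\alpha|_{K_v^h}=0$; the full isomorphism additionally requires a twisting/descent argument on top of Krasner. With that substitution your proof is complete and coincides with the paper's.
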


\begin{dem}
Part of the localization sequence in \'etale cohomology reads
$$
H^1(C, \calg)\to H^1(C',\calg)\to\bigoplus_{v\in C\setminus C'}H^2_v(C,\calg),
$$
so we are reduced to establishing an inclusion
$$
{\rm Ker}(H^1(C',\calg)\to\bigoplus_{v\in C\setminus C'}H^1(K_v,G))\subset {\rm Ker}(H^1(C',\calg)\to\bigoplus_{v\in C\setminus C'}H^2_v(C,\calg)).
$$
We have excision isomorphisms
$
H^2_v(C,\calg)\cong H^2_v(\calo_v^h,\calg)
$,
where $\calo_v^h$ is the henselization of the local ring of $C$ at $v$. A second localization sequence for $\calo_v^h$ reads
$$
H^1(\calo_v^h,\calg)\to H^1(K_v^h,G)\to H^2_v(\calo_v^h,\calg)
$$
where $K_v^h$ is the fraction field of $\calo_v^h$. The composition of restriction maps $H^1(C',\calg)\to H^1(K,G)\to H^1(K_v^h,G)$ makes the diagram
$$
\begin{CD}
H^1(C',\calg) @>>> H^2_v(C,\calg)\\
@VVV @VV\cong V\\
H^1(K_v^h,G) @>>> H^2_v(\calo_v^h,\calg)\end{CD}
$$
commute by functoriality of the localization sequence. Thus our claim follows from the fact that the natural map $H^1(K_v^h,G)\to H^1(K_v,G)$ is an isomorphism (by the same argument as e.g. in the proof of \cite{hasza}, Lemma 2.7).
\end{dem}

\begin{proof}[Proof of Theorem \ref{main}] By Chevalley's theorem our $G_k$ is an extension of an abelian variety $A_k$ by an affine group scheme $N_k$.
 The unipotent part $U_k$ of $N_k$ is a normal subgroup scheme of $G_k$ isomorphic to a product of copies of ${\bf G}_a$, whence $H^1(K, U_K)=0$. This shows that the natural map $H^1(K, G)\to H^1(K, G/U_K)$ is injective, so in proving the theorem we may replace $G_k$ by $G_k/U_k$, i.e. assume $N_k$ is of multiplicative type. We set $\calg:=G_k\times_kC$, $\cala:=A_k\times_kC$ and $\N:=N_k\times_kC$.

We first show that the group $\Sha^1_C(G)$ is contained in the image of the group $${\mathcal K}^1(C,\calg):=\ker(H^1(C,\calg) \to \prod_{c \in C^{(1)}} H^1(k(c),\calg_c))$$ in $H^1(K,G)$, where $\calg_c$ denotes the fibre of $\calg$ over the closed point $c\in C$. Each element of $\Sha^1_C(G)$ lifts  to a locally trivial element in $H^1(C', \calg)$ for some open $C'\subset C$, hence to a locally trivial element in $H^1(C, \calg)$ by Lemma \ref{harder}. The restriction maps $H^1(C', \calg)\to H^1(K_c, \calg)$ factor through the maps $H^1(\calo_c, \calg)\to H^1(K_c, \calg)$, where $\calo_c$ is the completion of the local ring of $c$ and $H^1(\calo_c, \calg)\cong H^1(k(c), \calg_c)$ by smoothness of $\calg$. To conclude the proof of the claim we show injectivity of the maps $H^1(\calo_c, \calg)\to H^1(K_c, \calg)$. Consider the commutative diagram with exact rows
$$
\begin{CD}
H^0(\calo_c, \cala) @>>> H^1(\calo_c, \N) @>>> H^1(\calo_c, \calg) @>>> H^1(\calo_c, \cala)\\
@V{\cong}VV @VVV @VVV @VVV\\
H^0(K_c, \cala) @>>> H^1(K_c, \N) @>>> H^1(K_c, \calg) @>>> H^1(K_c, \cala).
\end{CD}
$$
Properness of $\cala$ implies that the first vertical map is an isomorphism and the last one is injective. The second vertical map is injective, for instance, because of the general result of Nisnevich \cite{nis}, whence injectivity of the third map follows by a diagram chase.

Now since the group $H^1(K,G)$ is torsion (\cite{dhbook},
Corollary~4.23),
it is sufficient to prove that the group ${\mathcal K}^1(C,\calg)$ is finitely generated. We first prove this after passing to a suitable finite \'etale cover of $C$.
Choose a finite
Galois field extension $k_1$ of $k$
such that the base change $N_{k_1}$ splits as a finite direct product of copies of $\G$ and of finite constant group schemes isomorphic to $\Z/m\Z$ for some $m\in\Z$.
Set $D:=C \times_k k_1$, and consider the commutative diagram with exact row
$$
\begin{CD}
H^1(D, \N) @>\rho>> H^1(D, \calg) @>>> H^1(D, \cala) \\
&& @VVV @VVV \\
&& \prod_{d \in D^{(1)}} H^1(k_1(d),G_k) @>>> \prod_{d \in D^{(1)}} H^1(k_1(d),A_k).
\end{CD}
$$
We show that the kernel ${\mathcal K}^1(D,\calg)$ of the first vertical map is finitely generated.
For this we begin by observing that the kernel ${\mathcal K}^1(D,\cala)$ of the second vertical map is finite. Indeed, the map $H^1(D,\cala)\to H^1(k_1(D), A_{k_1(D)})$ is injective because a generically trivial $\cala$-torsor is trivial by the valuative criterion of properness, so we may identify  $H^1(D,\cala)$ with a subgroup of $H^1(k_1(D), A_{k_1(D)})$. On the other hand, as in the previous paragraph, an element of $H^1(D,\cala)$ with trivial image in $H^1(k_1(d),A_k)\cong H^1(\calo_d, \cala)$ has trivial image by the restriction map $H^1(k_1(D), A_{k_1(D)})\to H^1(k_1(D)_d, A_{k_1(D)})$. Thus we have an inclusion ${\mathcal K}^1(D,\cala)\subset\Sha^1_D(A_{k_1(D)})$, and the latter group is finite by (\cite{saiditag}, Theorem 4.1).  This being said, it suffices to show that  $\rho(H^1(D,\N))\cap{\mathcal K}^1(D,\calg)$ is finitely generated. The group $H^1(D,\N)$ splits as a finite product of copies of $H^1(D,\G)$ and of groups of the form  $H^1(D,\Z/m\Z)$. The unique smooth $k_1$-compactification $Y$ of $D$ has finitely generated Picard group by the Mordell--Weil theorem, whence $H^1(D,\G)\cong\pic D$ is finitely generated as a quotient of $\pic Y$. As for $H^1(D,\Z/m\Z)$, it sits in an exact sequence
$$
0\to H^1(k_1,\Z/m\Z)\to H^1(D,\Z/m\Z)\to H^1(\overline D,\Z/m\Z)
$$
where $\overline D$ is the base change of $D$ to an algebraic closure. Since the cohomology group $H^1(\overline D,\Z/m\Z)$ is finite (by basic theorems of \'etale cohomology or by finiteness of the mod $m$ quotient of the abelianized fundamental group $\pi_1^{\rm ab}(\overline D)$) we reduce to proving finiteness of the intersection $\rho(H^1(k_1,\Z/m\Z))\cap{\mathcal K}^1(D,\calg)$. But this is an $m$-torsion group contained in the kernel of the map
\begin{equation}\label{map}
H^1(k_1,G_{k_1}) \to \prod_{d \in D^{(1)}}
	H^1(k_1(d), G_{k_1})
\end{equation}
(here we identify $H^1(k_1,G_{k_1})$ with its image in $H^1(D,\calg)$).

In case (a) of the theorem this kernel is contained in the group $\Sha^S(G_{k_1})$ for a suitable $S$ by Lemma \ref{lemmt}. But the latter group has finite $m$-torsion part by Lemma \ref{pt}, and we are done. In case (b) the map (\ref{map}) is injective because the $k$-point of $C$ induces a $k_1$-point of $D$ and the map is the identity on the corresponding component of the product.

We can now prove that ${\mathcal K}^1(C,\calg)$ is finitely generated. With the notation $\Gamma:=\gal(k_1|k)$ we have a commutative diagram with exact row
$$
\begin{CD}
H^1(\Gamma, \calg(D)) @>>> H^1(C, \calg) @>>> H^1(D, \calg) \\
&& @VVV @VVV \\
&& \prod_{c \in C^{(1)}} H^1(k(c),G_k) @>>> \prod_{d \in D^{(1)}} H^1(k_1(d),
G_{k_1}).
\end{CD}
$$
where the second vertical map has finitely generated kernel by the previous paragraph.  It therefore suffices to verify finiteness of the intersection of ${\rm Im}(H^1(\Gamma, \calg(D))\to H^1(C,\calg))$ with ${\mathcal K}^1(C,\calg)$. Consider the exact sequence
\begin{equation}\label{devisG}
H^1(\Gamma, \N(D))\to H^1(\Gamma, \calg(D))\to H^1(\Gamma, \cala(D)).
\end{equation}
Since $\cala(D)$ equals $A(k_1(D))$ by properness of $\cala$ over $D$ and the latter group is finitely generated by the Mordell--Weil/Lang--N\'eron theorem, we conclude that $H^1(\Gamma, \cala(D))$ is finite (see e.g. \cite{dhbook}, Corollary 1.50). In order to treat $H^1(\Gamma, \N(D))$, set $L:=\N(D)/N_k(k_1)$. It is a finitely generated abelian group because
so is $\G(D)/\G(k_1)=k_1[D]^*/k_1^*$
in view of the exact sequence
$$0 \to k_1^* \to k_1[D]^*  \to \Div_{Y\setminus D} \, Y$$ where the last group denotes divisors on $Y$ supported in the (finite) complement of $D$.
Thus in the exact sequence
\begin{equation}\label{devisN}
H^1(\Gamma,N_k(k_1)) \to H^1(\Gamma,\N(D)) \to H^1(\Gamma,L)\end{equation}
the group $H^1(\Gamma,L)$ is finite for the same reason as above. Hence by exact sequences (\ref{devisG}) and (\ref{devisN}) we reduce to showing finiteness of the intersection $${\rm Im}(H^1(\Gamma,N_k(k_1))\to H^1(C,\calg))\cap{\mathcal K}^1(C,\calg).$$
This group in turn is contained in the image of the group
$$
K_G:=\ker(H^1(k,G_{k}) \to \prod_{c \in C^{(1)}}
	H^1(k(c), G_{k}))
$$
in $H^1(C,\calg)$. But the group $K_G$ is trivial in case (b) and is finite in case (a) because it is of finite exponent (by a restriction-corestriction argument applied to some residue field $k(c)$) and has finite $m$-torsion by Lemmas \ref{lemmt} and  \ref{pt}.
 \end{proof}

\section{Non-triviality of Tate--Shafarevich groups}

In this section we construct examples of tori $T$ with non-trivial $\Sha^1_X(T)$ as stated in Proposition \ref{example}. In the whole section $k$ is a number field, otherwise we keep the notation from the introduction.

We start with some statements about the cohomology of $\G$ that we consider interesting in their own right. Denote by $\Omega_k$ the set of
places of $k$ and by $k_v$ the completion of $k$ at $v \in \Omega_k$. Define the locally trivial part of the Brauer group of the curve $X$ by

$$\br_{\rm lt} (X):=\ker(\br X \to \prod_{v\in\Omega_k}\br (X\times_k{k_v})).$$

\begin{prop} \label{keyprop}
For every nonempty open subcurve $C\subset X$ we have an isomorphism
$$\Sha^2_C(\G)\cong\br_{\rm lt} (X).$$
In particular the group $\Sha^2_C(\G)$ is independent of $C$.
\end{prop}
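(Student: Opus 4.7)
Since $X$ is regular, $\br X\hookrightarrow\br K$, so both $\Sha^2_C(\G)$ and $\br_{\rm lt}(X)$ can be viewed as subgroups of $\br K$. The plan is to show they coincide, which simultaneously yields the isomorphism and the independence of $C$. First I would establish $\Sha^2_C(\G)\subset\br X$: triviality of $\alpha\in\Sha^2_C(\G)$ in $\br K_c$ forces vanishing of the residue $\partial_c\alpha\in H^1(k(c),\Q/\Z)$, so by Faddeev $\alpha$ lies in $\br C$. To upgrade to $\br X$, I would invoke the $i=2$ analog of Lemma~\ref{harder}, whose proof transports verbatim via excision $H^3_x(X,\G)\cong H^3_x(\calo_x^h,\G)$ and the localization sequence $\br\calo_x^h\to\br K_x^h\to H^3_x(\calo_x^h,\G)$, the relevant contribution to $H^3_x$ being precisely the residue $H^1(k(x),\Q/\Z)$; the required triviality of $\alpha$ in $\br K_x$ for $x\in X\setminus C$ and the independence of $C$ are obtained together, by absorbing missing points one at a time. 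Once $\alpha\in\br X$, the map $\br X\to\br K_c$ factors as $\br X\to\br\widehat\calo_{X,c}\cong\br k(c)\hookrightarrow\br K_c$ (Hensel and Witt), so vanishing in $\br K_c$ is equivalent to $\alpha(c)=0\in\br k(c)$.

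For the containment $\br_{\rm lt}(X)\subset\Sha^2_C(\G)$: given $\alpha\in\br_{\rm lt}(X)$ and $c\in C^{(1)}$, each place $w$ of $k(c)$ lying over some $v\in\Omega_k$ provides a $k(c)_w$-point of $X_{k_v}$; evaluation at this point sends the (vanishing) image of $\alpha$ in $\br X_{k_v}$ to the restriction of $\alpha(c)$ in $\br k(c)_w$, which therefore vanishes. Brauer--Hasse--Noether on the number field $k(c)$ then forces $\alpha(c)=0$, whence $\alpha\in\Sha^2_C(\G)$.

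The reverse inclusion $\Sha^2_C(\G)\subset\br_{\rm lt}(X)$ is the technical heart. Given $\alpha\in\br X$ with $\alpha(c)=0\in\br k(c)$ for every closed $c\in X$, each closed point $P$ of $X_{k_v}$ arises from some closed $c\in X$ with $k_v(P)$ a completion of $k(c)$, so the base change $\alpha_v\in\br X_{k_v}$ vanishes at every closed point of $X_{k_v}$. The problem reduces to the local statement that a Brauer class on a smooth projective curve over a local field $k_v$ vanishing at all closed points must itself be zero, which I would verify via the Hochschild--Serre spectral sequence $H^p(k_v,H^q(\ov X_v,\G))\Rightarrow H^{p+q}(X_{k_v},\G)$ combined with $\br\ov X_v=0$: the graded pieces $\br k_v$ and $H^1(k_v,\pic\ov X_v)$ are detected by evaluations at closed points, via a $k_v$-point obtained from Lang--Weil/Hensel for the former and via a Chebotarev density argument on Frobenius classes for the latter. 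This final local vanishing step is where I expect the main difficulty to lie.
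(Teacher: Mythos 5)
The easy containment $\br_{\rm lt}(X)\subset\Sha^2_C(\G)$ in your proposal matches the paper (evaluation at $k(c)_w$-points plus the Hasse principle for $\br k(c)$), and the reduction of local triviality at $c$ to $\alpha(c)=0$ via $\br \calo_c\cong\br k(c)\hookrightarrow\br K_c$ is fine. The genuine gap is in your very first step, the claim $\Sha^2_C(\G)\subset\br X$. Purity does give $\alpha\in\br C$, since the residues at points of $C$ factor through $\br K_c$. But to pass from $\br C$ to $\br X$ you must kill the residues $\partial_x\alpha\in H^1(k(x),\Q/\Z)$ at the points $x\in X\setminus C$, and the definition of $\Sha^2_C(\G)$ imposes \emph{no} local condition at those points. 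Your appeal to an $i=2$ analogue of Lemma \ref{harder} presupposes exactly the triviality of $\alpha$ in $\br K_x$ for $x\in X\setminus C$ that has to be proved; ``absorbing missing points one at a time'' supplies no mechanism, because enlarging $C$ does not create a hypothesis at the new point. This is the crux of the proposition and it requires real arithmetic input: the paper first shows $\alpha(P_v)=0$ for all $k_v$-points of $C$ (henselizations, Greenberg approximation) and then invokes the formal lemma (\cite{dhduke}, Th.~2.1.1) to conclude $\alpha\in\br X$; Wittenberg's variant instead proves $\alpha_v=0$ in $\br(C\times_kk_v)$ for every $v$ via Scheiderer--van Hamel duality for open curves, and only then kills each residue $\partial_x\alpha$ --- a character of the number field $k(x)$ --- by Chebotarev, since it is everywhere locally trivial. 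In other words, the local analysis must precede the extension to $\br X$, whereas your plan runs in the opposite order and is circular.

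A secondary problem is the final local step. That a class in $\br(X\times_kk_v)$ vanishing at all closed points is zero is precisely Lichtenbaum duality, a substantial theorem which the paper cites rather than reproves; your sketch does not establish it. Lang--Weil/Hensel produces $k_v$-points only for all but finitely many $v$, and at the remaining places $X\times_kk_v$ may have no rational point (indeed index $>1$), so the piece $\br k_v$ is not detected by evaluation at a $k_v$-point; and there is no Chebotarev density theorem over a local field with which to handle $H^1(k_v,\pic\ov X)$. Note also that at this stage you use $\alpha(c)=0$ for all closed points of $X$ including those of $X\setminus C$, which again rests on the unproved first step; the paper instead extends the vanishing from points above $C$ to all of $X(k'_v)$ by local constancy of the evaluation map.
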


\dem Suppose that $\alpha \in \br X \subset \br K$ has trivial restriction to $\br (X\times_k{k_v})$ for
every place $v$ of $k$. If $c$ is a closed point of $X$ with residue
field $k(c)$ and $w$ is a place of $k(c)$ extending $v$, the commutative diagram of restriction maps
$$
\begin{CD}
\br X @>>> \br (X\times_k{k_v})\\
@VVV @VVV \\
\br k(c) @>>> \br k(c)_w
\end{CD}
$$
shows that the evaluation $\alpha(c) \in \br (k(c))$ has trivial restriction to
$\br (k(c)_w) $.
Therefore $\alpha(c)=0$ by the Hasse principle for Brauer groups, which in turn shows that the image of $\alpha$ in
$H^2(K_c,\G)$ is zero thanks to the injection $H^2(\calo_c,\G)
\hookrightarrow H^2(K_c,\G)$ and the isomorphism $H^2(\calo_c,\G) \cong
H^2(k(c),\G)$, where $\calo_c$ is the valuation ring of $K_c$. This proves that $\alpha \in \Sha^2_X(\G) \subset \Sha^2_C(\G)$.

Conversely, pick $\alpha_0 \in \Sha^2_C(\G)$. We first show that $\alpha_0$ comes from
$\alpha \in \br X$. Shrinking $C$ if necessary, we may assume that $\alpha_0$ comes from
$\alpha \in \br C$, hence the evaluation $\alpha(c) \in \br (k(c))$ is
trivial for every closed point $c \in C$. For a place $v$ of $k$ consider
the henselization $k_v^h$ of $k$ at $v$. As $k_v^h$ is an algebraic
extension of $k$, the image of every $k$-morphism $\spec k_v^h \to C$
is a closed point $c \in C$, which implies by functoriality
that $\alpha(P_v^h)=0$ for every $k_v^h$-point $P_v^h \in C(k_v^h)$
(since by assumption $\alpha(c)=0$). By Greenberg's approximation theorem,
this also proves that $\alpha(P_v)=0$ for every $k_v$-point
$P_v \in C(k_v)$. Finally, from (\cite{dhduke} Th. 2.1.1)
we conclude that $\alpha \in \br X$.

We now prove that the restriction $\alpha_v \in \br (X \times_k k_v)$
of $\alpha$ is zero for all places $v$ of $k$. Let $k'_v$ be a finite
field extension of $k_v$ and take $P'_v \in X(k'_v)$. By the same
argument as above, we have $\alpha(P'_v)=0$ if $P'_v$ lies above a closed point of $C$, hence for every $P'_v \in X(k'_v)$ since the evaluation map for $\alpha$ is locally constant (\cite{ctskobook}, Proposition 10.5.2). Therefore
$\alpha(z_v)=0$ for all zero-cycles $z_v$  on $X \times_k k_v$.
By Lichtenbaum duality \cite{licht} for the curve $X \times_k k_v$ it follows
that $\alpha_v=0$. This is what we wanted to prove.

\begin{rema}\rm Olivier Wittenberg contributes the following alternative argument for the second part of the above proof which does not use the difficult result from \cite{dhduke}. As above, we lift $\alpha_0 \in \Sha^2_C(\G)$ to $\alpha\in\br C$ and prove that $\alpha(z_v)=0$ for all zero-cycles $z_v$  on $C \times_k k_v$. By a version of Lichtenbaum duality for open curves (\cite{schvh}, Theorem 3.5 and Remark 2.10.2) it follows that $\alpha$ maps to 0 in $\br(C \times_k k_v)$ and therefore $\alpha_0$ maps to 0 in $H^2(K\times_kk_v,\G)$ for all $v$. Hence for a closed point $P\in X$ the residue of $\alpha_0$ in $H^1(k(P),\Q/\Z)$ has trivial image in $H^1(k(P)_w,\Q/\Z)$ for every place $w$ of $k(P)$ and is therefore trivial by the Chebotarev density theorem. This shows that $\alpha_0$ lifts to an element in $\br_{\rm lt} (X)$.
\end{rema}

\begin{question}\rm
Can one generalize the isomorphism of the proposition to $k$-group schemes other than $\G$? In particular, is there an analogous formula for $\Sha^2_C(F_k)$ with $F_k$ a finite $k$-group scheme?
\end{question}

\begin{theo} \label{h2theo}
Let $J$ be the Jacobian variety of the curve $X$. If $X$ has a zero-cycle of degree 1, we have isomorphisms
$$
\Sha^2_C(\G) \cong \Sha(J)
$$
for every nonempty open subcurve $C\subset X$.
\end{theo}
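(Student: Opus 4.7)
The plan is to leverage Proposition \ref{keyprop}, which identifies $\Sha^2_C(\G)$ with $\br_{\rm lt}(X)$ independently of $C$; it then suffices to construct an isomorphism $\br_{\rm lt}(X)\cong\Sha(J)$.

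The key input is a short exact sequence
$$0\to\br k\to\br X\to H^1(k,J)\to 0.$$
I would obtain it from the Hochschild--Serre spectral sequence $H^p(k,H^q(\ov X,\G))\Rightarrow H^{p+q}(X,\G)$. Tsen's theorem gives $\br\ov X=0$, so the resulting seven-term sequence ends with $\br X\to H^1(k,\pic\ov X)\to H^3(k,\G)\to H^3(X,\G)$. The hypothesis that $X$ carries a zero-cycle $z$ of degree $1$ enters twice. First, pullback along the closed points appearing in $z$ composed with corestriction yields a retraction of the restriction $H^i(k,\G)\to H^i(X,\G)$ for every $i$, since $\deg(z)=1$; in particular the restriction is injective at $\br k$ and at $H^3(k,\G)$, so the above sequence collapses to $0\to\br k\to\br X\to H^1(k,\pic\ov X)\to 0$. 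Second, $[z]\in\pic X$ defines a $\gal(\ov k|k)$-invariant degree-$1$ element of $\pic\ov X$, producing a Galois-equivariant splitting of $0\to J(\ov k)\to\pic\ov X\to\Z\to 0$. Since $H^1(k,\Z)$ vanishes (no nonzero continuous homomorphisms from a profinite group to $\Z$), this yields $H^1(k,\pic\ov X)\cong H^1(k,J)$.

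Because $X\times_k k_v$ likewise has a zero-cycle of degree $1$, the same construction over each completion yields an analogous sequence. I would assemble them into a commutative diagram with exact rows
$$\begin{CD}
0 @>>> \br k @>>> \br X @>>> H^1(k,J) @>>> 0\\
@. @VVV @VVV @VVV\\
0 @>>> \prod_v\br k_v @>>> \prod_v\br(X\times_k k_v) @>>> \prod_v H^1(k_v,J) @>>> 0,
\end{CD}$$
where the left vertical arrow is injective by the Albert--Brauer--Hasse--Noether theorem. The snake lemma then produces
$$0\to\br_{\rm lt}(X)\to\Sha(J)\to\cok\Bigl(\br k\to\prod_v\br k_v\Bigr).$$

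The step that requires the most care is checking that this last arrow vanishes, which is where reciprocity intervenes. Given $\sigma\in\Sha(J)$, pick a lift $\alpha\in\br X$ and set $\alpha_v:=\alpha|_{X\times_k k_v}\in\br k_v$. Evaluating at $z$ gives $\alpha(z)\in\br k$; since $\alpha_v\in\br k_v$ while $z\times_k k_v$ still has degree $1$, functoriality of evaluation forces $\alpha(z)|_{k_v}=\alpha_v$. Hasse reciprocity $\sum_v{\rm inv}_v(\alpha(z)|_{k_v})=0$ therefore implies $\sum_v{\rm inv}_v(\alpha_v)=0$, so by class field theory $(\alpha_v)$ comes from some $\alpha_0\in\br k$. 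Subtracting the pullback of $\alpha_0$ from $\alpha$ produces a lift of $\sigma$ inside $\br_{\rm lt}(X)$, establishing the desired isomorphism.
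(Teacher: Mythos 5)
Your proposal is correct and follows essentially the same route as the paper: Proposition \ref{keyprop}, the Hochschild--Serre spectral sequence combined with Tsen's theorem over $k$ and over each $k_v$, and the identification $H^1(k,\pic \ov X)\cong H^1(k,J)$ coming from the degree-one zero-cycle. The only (cosmetic) divergence is the last step: the paper notes that evaluation at the zero-cycle splits both rows compatibly, so the kernel of the middle vertical map is immediately that of the right one, whereas you run the snake lemma and dispose of the boundary term via reciprocity --- and in fact your own identity $\alpha_v=\alpha(z)|_{k_v}$ already exhibits $\alpha(z)\in\br k$ as the required global class, so the appeal to Hasse reciprocity is superfluous.
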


Here $\Sha(J)$ denotes the classical Tate-Shafarevich group of $J$, as in (\ref{shaclassic}).\medskip

\dem  We have to construct an isomorphism $\br_{\rm lt} (X)\cong\Sha(J)$  thanks to Proposition~\ref{keyprop}.
Since $X$ is a curve, the base change $\ov X=X \times_k \kbar$ satisfies  $\br \ov X=0$ by Tsen's theorem.
Thus from the Hochschild-Serre spectral sequence and the vanishing of
$H^3(k,\G)$ one deduces an exact sequence
$$0\to\br k \to \br X \to H^1(k,\pic \ov X) \to 0$$
where the first map is injective by the assumption that $X$ has a zero-cycle of degree 1. More precisely, this assumption gives a splitting of the map $\br k \to \br X$, so the sequence is in fact split exact.

Repeating the argument with $k$ replaced by its completions $k_v$ we obtain a commutative diagram with split exact rows
$$
\begin{CD}
0 @>>> \br k @>>> \br X @>>> H^1(k,\pic \ov X) @>>> 0 \cr
&& @VVV @VVV @VVV \cr
0 @>>> \prod_{v \in \Omega_k} \br k_v @>>> \prod_{v \in \Omega_k} \br (X\times_k{k_v})
@>>> \prod_{v \in \Omega_k} H^1(k_v,\pic \ov X) @>>> 0 .
\end{CD}
$$
Since the left vertical map is injective by the Hasse principle for Brauer groups, we obtain an isomorphism
$$
\br_{\rm lt} (X)\cong \ker(H^1(k,\pic \ov X)\to  \prod_{v \in \Omega_k} H^1(k_v,\pic \ov X)).
$$
On the other hand,
the exact sequence of Galois modules
$$0 \to J(\kbar) \to \pic \ov X \to \Z \to 0$$
induces an isomorphism
$H^1(k,J) \stackrel\sim\to H^1(k,\pic \ov X)$ in view of the vanishing of $H^1(k,\Z)$ and the assumption that $X$ has a zero-cycle of degree 1. Similarly, we have $H^1(k_v,J) \stackrel\sim\to H^1(k_v,\pic \ov X)$ for $v\in\Omega$ and the statement follows.
\enddem

\begin{rema}\rm By refining the arguments in the above proof one can show that even in the case where $X$ has no zero-cycle of degree 1 the finiteness of $\Sha(J)$ implies the finiteness of  $\Sha^2_C(\G)$. See the proof of Proposition \ref{cormain} below for some of the ideas involved.
\end{rema}

We now construct the promised examples of tori.

\begin{prop} \label{normic}
Let $L|K$ be a finite cyclic Galois extension with Galois group $\Gamma$, and let $Y$ be the normalization of $X$ in $L$. Denote by $J_X$ (resp. $J_Y$) the Jacobian variety of $X$ (resp. $Y$) and suppose that both $X$ and $Y$ have a zero-cycle of degree 1.

For the normic torus $T:=\R^1_{L/K} \G$ we have an isomorphism $$\Sha^1_X(T)\cong\ker [\Sha(J_X) \to
\Sha(J_Y)].$$
\end{prop}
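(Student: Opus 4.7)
The plan is to identify $H^1(K,T)$ with the relative Brauer group $\br(L/K)$ and then use Theorem \ref{h2theo} to relate $\Sha^1_X(T)$ to the Tate--Shafarevich groups of the Jacobians.

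For the first step, I start from the exact sequence of $K$-tori
$$1 \to T \to R_{L/K}\G \to \G \to 1$$
and take cohomology. Shapiro's lemma and Hilbert 90 give $H^i(K,R_{L/K}\G) = H^i(L,\G)$ with $H^1(L,\G)=0$, so the long exact sequence yields $H^1(K,T) = K^*/N_{L/K}L^*$. For cyclic $L/K$ the cyclic algebra construction then furnishes a natural isomorphism $K^*/N_{L/K}L^* \xrightarrow{\sim} \br(L/K) := \ker(\br K \to \br L)$, which is the classical Tate periodicity identification $\widehat{H}^0(\Gamma,L^*) \cong \widehat{H}^2(\Gamma,L^*)$ for a cyclic group $\Gamma$. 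Running the same argument over each completion $K_c$ with the \'etale $K_c$-algebra $L\otimes_K K_c = \prod_{y|c}L_y$, I obtain $H^1(K_c,T) \cong \ker(\br K_c \to \prod_{y|c}\br L_y)$, and these local isomorphisms are compatible with the global one under restriction.

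Combining these identifications with the definition of $\Sha^1_X(T)$, I then deduce
$$\Sha^1_X(T) \cong \br(L/K)\cap\Sha^2_X(\G) = \ker(\Sha^2_X(\G)\to\br L),$$
because a class $\beta\in\br(L/K)\subset\br K$ lies in $\Sha^1_X(T)$ precisely when $\beta\otimes K_c = 0$ in $\br K_c$ for every $c\in X^{(1)}$ (the kernel $\ker(\br K_c\to\prod\br L_y)$ is a subgroup of $\br K_c$). Moreover, since any closed point $y$ of $Y$ lies over some $c\in X^{(1)}$ with $L_y$ a factor of $L\otimes_K K_c$, the restriction map $\br K\to\br L$ carries $\Sha^2_X(\G)$ into $\Sha^2_Y(\G)$, so the kernel above equals $\ker(\Sha^2_X(\G)\to\Sha^2_Y(\G))$.

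Finally, applying Theorem \ref{h2theo} to $X$ over $k$ and to $Y$ over the algebraic closure $k'$ of $k$ in $L$, and using the hypothesis that both $X$ and $Y$ carry a zero-cycle of degree $1$, yields $\Sha^2_X(\G)\cong\Sha(J_X)$ and $\Sha^2_Y(\G)\cong\Sha(J_Y)$, giving the desired isomorphism. The main technical obstacle will be to verify that under these two applications of Theorem \ref{h2theo} the restriction map $\Sha^2_X(\G) \to \Sha^2_Y(\G)$ corresponds to the natural pullback $\Sha(J_X)\to\Sha(J_Y)$ induced by pullback of line bundles. This amounts to a functoriality check for the construction in the proof of Theorem \ref{h2theo}, namely the Hochschild--Serre splitting $\br X \cong \br k\oplus H^1(k,\pic\ov X)$ and the identification $H^1(k,\pic\ov X) \cong H^1(k,J_X)$ coming from $0\to J\to\pic\ov X\to\Z\to 0$, with respect to the finite cover $Y\to X$; this reduces to naturality of the relevant spectral sequences and short exact sequences.
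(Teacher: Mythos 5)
Your proposal is correct and follows essentially the same route as the paper: the exact sequence $1\to T\to \R_{L/K}\G\to\G\to 1$ together with Shapiro, Hilbert 90 and the periodicity of cyclic group cohomology identifies $H^1(K,T)$ (and each $H^1(K_c,T)$) with the relative Brauer group, giving $\Sha^1_X(T)\cong \br(L/K)\cap\Sha^2_X(\G)\cong\ker(\Sha^2_X(\G)\to\Sha^2_Y(\G))$, and then Theorem \ref{h2theo} applied to $X$ and to $Y$ (over the algebraic closure of $k$ in $L$) concludes. The functoriality check you flag at the end is a genuine point, but it is routine naturality of Hochschild--Serre and of the sequence $0\to J\to\pic\ov X\to\Z\to 0$, and the paper leaves it implicit as well.
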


\dem
By definition of $T$ we have an exact sequence of tori
$$1 \to T \to \R_{L/K} \G \stackrel{N_{L/K}}{\to} \G \to 1.$$
In view of Hilbert's Theorem 90, Shapiro's lemma and the $2$-periodicity of the cohomology of finite
cyclic groups, the sequence yields isomorphisms
$$H^1(K,T)\cong K^*/N_{L/K} L^*=\hat H^0(\Gamma,L^*) \cong H^2(\Gamma,L^*)=\br(L/K),$$
and likewise with $K$ replaced by a completion $K_c$, $c \in X^{(1)}$.
Therefore
$$\Sha^1_X(T) \cong \Sha^2_X(\G) \cap \br(L/K) \cong
\ker \left(\Sha^2_X(\G) \to \Sha^2_Y(\G)\right).$$

The result now follows from
Theorem~\ref{h2theo}.
\enddem

To complete the proof of Proposition \ref{example} we give a simple concrete example.

\begin{ex}\label{exconcrete}\rm The Selmer curve $S$ is defined in ${\bf P}_{\Q}^2$ by the equation
$$3x^3+4y^3+5z^3=0.$$
It is a genus one curve with points everywhere locally and
no rational point. Its Jacobian $E$ is an elliptic curve over
$k=\Q$. The curve $S$ acquires a rational point over a cyclic extension $k'|k$ of degree $6$, so the
 non-zero class $[S] \in \Sha(E)$ has trivial restriction
to $\Sha(E\times_k{k'})$.
Let $K$ be the function field of $E$ and $L:=Kk'$. Then the normalisation
of $E$ in $L$ is just $E\times_k k'$, and by the previous proposition for the torus $T=\R^1_{L/K} \G$ the curve $S$ represents a nontrivial class in the group
$$\Sha^1_X(T)\cong\ker(\Sha^1(E) \to \Sha^1(E \times_k k')).$$

\smallskip

Observe that in this example the torus $T$ is constant over the function field $K$ of
the elliptic curve $E$. To get a non-constant example over ${\bf P}^1$,
one can simply take the Weil restriction ${\bf R}_{K/\Q(t)} T$.
\end{ex}

\section{Remarks and Complements}

In this section we collect a number of observations related to the results of the previous sections. We begin with the following consequence of Theorem \ref{main}.

\begin{prop}\label{cormain} Let $k$ be a number field.
If $F_k$ is a finite $k$-group scheme and $F=F_k\times_kK$, then the group $\Sha^2_C(F)$ is finite.
\end{prop}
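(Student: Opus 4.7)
The plan is to embed $F_k$ into an induced torus over $k$ and then combine Theorem~\ref{main} with the arithmetic of Proposition~\ref{keyprop} and Theorem~\ref{h2theo}. In characteristic $0$ every finite commutative $k$-group scheme is étale and of multiplicative type; writing the character group of $F_k$ as a quotient of a permutation Galois module yields an embedding $F_k\hookrightarrow T_k$ into an induced torus $T_k=\prod_i\R_{k_i/k}\G$, whose quotient $T'_k:=T_k/F_k$ is again a torus. Base-changing to $K$ I obtain a short exact sequence $0\to F\to T\to T'\to 0$ of commutative $K$-group schemes of finite type and hence the four-term cohomology sequence
$$H^1(K,T)\xrightarrow{\phi} H^1(K,T')\xrightarrow{\delta} H^2(K,F)\xrightarrow{\iota} H^2(K,T).$$

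By Theorem~\ref{main}(a) both $\Sha^1_C(T)$ and $\Sha^1_C(T')$ are finite. Since $T$ is induced, Shapiro's lemma combined with Proposition~\ref{keyprop} identifies $\Sha^2_C(T)\cong\bigoplus_i\br_{\rm lt}(X_i)$, where $X_i$ denotes the smooth compactification of $C\times_kk_i$. The argument of Theorem~\ref{h2theo}, together with the remark following it, reduces finiteness of $\br_{\rm lt}(X_i)[n]$ to that of $\Sha(J_{X_i})[n]$, which is a classical consequence of the finiteness of Selmer groups via Mordell--Weil. Since $H^2(K,F)$ is $n$-torsion for $n:=\exp F_k$, the image $\iota(\Sha^2_C(F))\subseteq\Sha^2_C(T)[n]$ is therefore finite.

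It remains to bound the kernel $\ker(\iota|_{\Sha^2_C(F)})$, which by the four-term exactness is identified with $L/\phi(H^1(K,T))$ for $L:=\{\beta\in H^1(K,T'): \beta_c\in\phi_c(H^1(K_c,T))\text{ for every }c\in C^{(1)}\}$. The plan is to perform a snake-lemma-style diagram chase on the commutative diagram joining the global four-term sequence to its local counterparts at each $K_c$, and to use the finiteness of $\Sha^1_C(T')$ from Theorem~\ref{main} combined with Proposition~\ref{h1prop} (the finiteness of $\Sha^1_C(F)$) to show that $L$ coincides with $\Sha^1_C(T')+\phi(H^1(K,T))$ modulo a finite defect. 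Managing this local-to-global defect precisely --- understanding how the local obstruction maps $H^1(K_c,F)\to H^1(K_c,T)$ assemble into a finite global obstruction --- is the principal technical difficulty, after which combining the two bounds yields finiteness of $\Sha^2_C(F)$.
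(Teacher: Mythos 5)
Your dévissage is the same as the paper's: resolve $F_k$ by a quasi-trivial (induced) torus, $0\to F_k\to T_k\to T'_k\to 0$, and compare $\Sha^2_C(F)$ with $\Sha^1_C(T')$ and with $\Sha^2_C(T)$. But the proposal goes wrong in two places. The less serious one: the ``principal technical difficulty'' you leave open at the end does not exist. Since $T$ is induced, Shapiro's lemma and Hilbert's Theorem 90 give $H^1(K,T)=0$ and also $H^1(K_c,T)=0$ for every $c\in C^{(1)}$ (the algebras $k_i\otimes_kK_c$ are finite products of fields). Hence $\phi=0$ and $\phi_c=0$, the connecting map $\delta$ is injective both globally and locally, your set $L$ is exactly $\Sha^1_C(T')$, and $\ker(\iota|_{\Sha^2_C(F)})=\delta(\Sha^1_C(T'))\cong\Sha^1_C(T')$, which is finite by Theorem \ref{main}(a) because $T'$ is a constant torus. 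No snake-lemma analysis of a ``local-to-global defect'' and no appeal to $\Sha^1_C(F)$ is needed; conversely, as written your argument for this half is not finished.

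The substantive gap is the finiteness of $\br_{\rm lt}(X_i)[n]$. Theorem \ref{h2theo} assumes the curve has a zero-cycle of degree $1$, which the compactifications $X_i$ of $C\times_kk_i$ have no reason to possess, and the remark following that theorem merely asserts the general case while referring for its justification to the proof of the very proposition you are establishing -- so citing it here is circular. This step is where the actual work lies, and it cannot be reduced verbatim to $\Sha(J_{X_i})[n]$: one must run the Hochschild--Serre comparison $\br k\to\br X\to H^1(k,\pic\ov X)$ against its local analogues and control three separate sources of error, namely (i) the $m$-torsion of $\ker\bigl(H^1(k,\pic\ov X)\to\prod_vH^1(k_v,\pic\ov X)\bigr)$ via finiteness of the $m$-Selmer group of the Jacobian, (ii) the failure of $\br k\to\br X$ to be injective, whose kernel is bounded using the finite generation of $H^0(k,\pic\ov X)$, and (iii) the places $v$ where $\br k_v\to\br(X\times_kk_v)$ need not split, which are finite in number because $X(k_v)\neq\emptyset$ for almost all $v$; one then concludes with the Hasse principle for $\br k$ and the finiteness of $n$-torsion in local Brauer groups. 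Without this diagram chase (or an equivalent substitute) the key bound $\iota(\Sha^2_C(F))$ finite is unproved.
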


This statement is of some interest as one can show using Ono's lemma (\cite{sansuc}, Lemma 1.7) that a positive answer to Question \ref{mainq} would follow from the finiteness of $\Sha^2_C(F)$ for all finite $K$-group schemes.\medskip

\begin{proof}
There is an exact sequence of $k$-group schemes
\begin{equation}\label{quasres}
0\to F_k\to Q_k\to T_k\to 0
\end{equation}
where $Q_k, T_k$ are $k$-tori and moreover $Q_k$ is quasi-trivial (write the character group of $F_k$ as a quotient of a permutation module, then dualize). Setting $T=T_k\times_kK$ and $Q=Q_k\times_kK$ as usual, we have $H^1(K, Q)=H^1(K_c, Q)=0$ by quasi-triviality of $Q_k$. This shows that (\ref{quasres}) induces an exact sequence
$$
0\to \Sha^1_C(T)\to \Sha^2_C(F)\to \Sha^2_C(Q)
$$
where the first group is finite by Theorem \ref{main}. Since $\Sha^2_C(F)$ is of finite exponent by finiteness of $F$, it suffices to show that $\Sha^2_C(Q)$ has finite $m$-torsion for all $m>0$. To prove this, by quasi-triviality of $Q$ we reduce to the case $Q=\G$ using Shapiro's lemma. Next, Proposition \ref{keyprop} applies and yields an isomorphism $\Sha^2_C(\G)\cong\br_{\rm lt}(X).$ Now consider the commutative diagram with exact rows
{\small $$
\begin{CD}
 H^0(k, \pic\ov X) @>>> \br k @>>> \br X @>>> H^1(k,\pic \ov X)  \cr
 && @VVV @VVV @VVV \cr
 && \prod_{v \in \Omega_k} \br k_v @>>> \prod_{v \in \Omega_k} \br (X\times_k{k_v})
@>>> \prod_{v \in \Omega_k} H^1(k_v,\pic \ov X)
\end{CD}
$$}

\noindent coming from the Hochschild--Serre spectral sequence as in the proof of Theorem \ref{h2theo}.
The kernel of the third vertical map has finite $m$-torsion by finiteness of the $m$-Selmer group of the Jacobian of $X$. The maps $\br k_v\to \br (X\times_k{k_v})$ have a splitting for all but finitely many $v$ because $X(k_v)\neq\emptyset$ for all but finitely many $v$ by the Weil estimates and Hensel's lemma. On the other hand, the image of the first map in the upper row is finite as $\br k$ is torsion and $H^0(k, \pic\ov X)$ is finitely generated. (Recall that to see this latter fact it is enough to show finiteness of $H^0(k, \pico\ov X)$ which can be checked after replacing $k$ by a finite extension where $X$ has a point; then $H^0(k, \pico\ov X)$ identifies with $\pico X$ which is finitely generated by the Mordell--Weil theorem.) Therefore an $m$-torsion element  $\alpha\in\br_{\rm lt}(X)$ with trivial image in $H^1(k,\pic \ov X)$ comes from an element $\beta\in\br k$ that is locally trivial up to finitely many $v$; furthermore, $n\beta=0$ for some $n\geq m$ independent of $\alpha$. The subgroup of such $\beta$ is finite by the Hasse principle for Brauer groups and the finiteness of $n$-torsion in the Brauer group of a local field. A diagram chase concludes the proof.
\end{proof}

We record the following easy consequence which was in fact proven in the course of the above proof:

\begin{cor}
The group $\Sha^2_C(\G)$ has finite $m$-torsion for all $m>0$.
\end{cor}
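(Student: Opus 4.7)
The plan is to reduce to the known case of $\Sha^2_C(\G)$ by invoking Proposition \ref{keyprop}, which gives the isomorphism $\Sha^2_C(\G)\cong\br_{\rm lt}(X)$, and then to run essentially the diagram chase that was already carried out at the end of the proof of Proposition \ref{cormain}.

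More precisely, I would fix $m>0$ and consider the commutative diagram with exact rows obtained from the Hochschild--Serre spectral sequence
$$
\begin{CD}
 H^0(k, \pic\ov X) @>>> \br k @>>> \br X @>>> H^1(k,\pic \ov X)  \cr
 && @VVV @VVV @VVV \cr
 && \prod_{v} \br k_v @>>> \prod_{v} \br (X\times_k{k_v})
@>>> \prod_{v} H^1(k_v,\pic \ov X)
\end{CD}
$$
and trace an $m$-torsion element $\alpha\in\br_{\rm lt}(X)$. Its image in $H^1(k,\pic\ov X)$ lies in the kernel of the right vertical map. Using the short exact sequence $0\to J(\kbar)\to\pic\ov X\to \Z\to 0$ (with $J$ the Jacobian of $X$) one sees that the $m$-torsion of this kernel is controlled by the $m$-Selmer group of $J$, which is finite by the standard theory.

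It then remains to control the elements of $\br_{\rm lt}(X)$ whose image in $H^1(k,\pic \ov X)$ vanishes: such an element comes from some $\beta\in\br k$. The image of $H^0(k,\pic\ov X)\to\br k$ is finite, since $H^0(k,\pic\ov X)$ is finitely generated (finiteness of $H^0(k,\pico\ov X)$ being a consequence of the Mordell--Weil theorem after passing to a finite extension where $X$ acquires a point) and $\br k$ is torsion. Modulo this finite group we may assume $\beta$ is well-defined. Since $X(k_v)\neq\emptyset$ for almost every $v$ by the Lang--Weil estimates and Hensel's lemma, for almost all $v$ the map $\br k_v\to\br(X\times_kk_v)$ is split injective, so $\beta$ becomes locally trivial at all but finitely many places and is annihilated by some integer $n$ depending only on $m$ (namely, a bounded multiple of $m$).

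The main (in fact only) obstacle is then to see that the group of such $\beta\in\br k$ is finite, but this follows at once from the Hasse principle for $\br k$ combined with the finiteness of $n$-torsion in $\br k_v$ for each of the finitely many remaining places $v$. Assembling these finitenesses through the diagram chase bounds the $m$-torsion of $\br_{\rm lt}(X)$, hence of $\Sha^2_C(\G)$, as required.
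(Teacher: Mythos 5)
Your proposal is correct and follows essentially the same route as the paper, which derives this corollary from the diagram chase at the end of the proof of Proposition \ref{cormain}: reduction to $\br_{\rm lt}(X)$ via Proposition \ref{keyprop}, control of the $m$-torsion of the kernel on $H^1(k,\pic\ov X)$ by the $m$-Selmer group of the Jacobian, and finiteness of the relevant subgroup of $\br k$ via the splittings at almost all places, the Hasse principle, and finiteness of $n$-torsion in local Brauer groups. No substantive difference to report.
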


The following application has been pointed out to us by J-L. Colliot-Th\'el\`ene. It gives the only non-constant tori for which we know the answer to Question \ref{mainq} at present.

\begin{cor}\label{propct}
If $T$ is a stably rational $K$-torus, then $\Sha^1_C(T)$ is finite.
\end{cor}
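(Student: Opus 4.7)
The plan is to derive the corollary from the classical structural theory of stably rational tori; in fact the hypothesis will already force the vanishing of $H^1(K,T)$ itself, a priori stronger than the finiteness of $\Sha^1_C(T)$, so the proof does not really use any of the machinery developed in the present paper.

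The key input I would cite is the Colliot-Th\'el\`ene--Sansuc--Saltman characterization of retract rational tori: a $K$-torus $T$ is retract $K$-rational if and only if its character module $\hat T$ is \emph{invertible}, i.e.\ a direct summand, as a continuous $G_K$-module, of a permutation module $P$ of finite rank. Since stably rational manifestly implies retract rational (by the elementary observation that $T$ is a retract of $T\times_K\mathbb{A}^n_K$, composed with a birational equivalence of the latter with some affine space), the hypothesis supplies a decomposition $P\cong\hat T\oplus M$ with $P$ permutation and $M$ torsion-free.

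Next, I would dualize this decomposition via the anti-equivalence between $K$-tori and finitely generated torsion-free continuous $G_K$-modules. This yields an isomorphism of $K$-tori $Q\cong T\times_K T'$, where $Q=\prod_i R_{L_i/K}\G$ is the quasi-trivial torus with character module $P$ and $T'$ is the torus with character module $M$. By Shapiro's lemma and Hilbert's Theorem~90,
\[
H^1(K,Q)=\prod_i H^1(L_i,\G)=0,
\]
and extracting the $T$-factor from the induced direct sum decomposition of $H^1(K,-)$ gives $H^1(K,T)=0$. A fortiori $\Sha^1_C(T)=0$, which is certainly finite. There is no real obstacle in the argument: its entire content is the citation of the structural result that stably rational tori have invertible character modules; everything else is a formal consequence of Hilbert~90, and it explains why this case of Question~\ref{mainq} can be settled without appealing to Theorem~\ref{main}.
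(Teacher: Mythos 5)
Your proof breaks down at the very first structural input: it is \emph{not} true that a stably (or retract) rational $K$-torus has invertible character module. The Colliot-Th\'el\`ene--Sansuc--Saltman criterion says that $T$ is retract rational if and only if the \emph{flasque class} $\rho(T)=[\hat S]$ is invertible, where $0\to\hat T\to\hat P\to\hat S\to 0$ is a flasque resolution with $\hat P$ a permutation module; this says nothing about $\hat T$ itself being a direct summand of $\hat P$, because such extensions by permutation modules need not split. A concrete counterexample kills both your hypothesis and your conclusion: for a quadratic extension $L|K$ the norm-one torus $T=\R^1_{L/K}\G$ is $K$-rational (a smooth affine conic with a rational point), yet $\hat T\cong\Z$ with the nontrivial $\gal(L|K)$-action has $H^1(\gal(L|K),\hat T)=\Z/2\neq 0$ and hence is not a summand of any permutation module; moreover $H^1(K,T)\cong K^*/N_{L/K}L^*$, which is \emph{infinite} when $K$ is a number field (the paper itself uses this in Example \ref{exinfinite}, citing \cite{dhbook}, ex.~15.1). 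So the claim $H^1(K,T)=0$ for stably rational $T$ is false, and the corollary cannot be obtained ``for free'' without the machinery of the paper.

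The correct structural input, used in the paper's proof, is Voskresenski{\u\i}'s theorem: for $T$ stably rational there is an exact sequence $1\to Q_1\to Q_2\to T\to 1$ with $Q_1,Q_2$ quasi-trivial, but sitting the \emph{other} way round, so that Hilbert 90 and Shapiro give an injection $\Sha^1_C(T)\hookrightarrow\Sha^2_C(Q_1)\cong\oplus_i\Sha^2_{D_i}(\G)$ rather than a vanishing statement. One then has to combine the fact that $\Sha^1_C(T)$ has finite exponent with the finiteness of the $m$-torsion of $\Sha^2(\G)$ (the corollary to Proposition \ref{cormain}, which rests on Proposition \ref{keyprop} and the Selmer-group finiteness for the Jacobian of $X$). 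None of this is formal, and your argument as written cannot be repaired without replacing its main citation.
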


\begin{proof}
Since the torus $T$ is stably $K$-rational, by a result of Voskresenski{\u \i} \cite{vosk1} (see also \cite{requiv}, Proposition 6) there is an exact sequence of $K$-tori
$$
1\to Q_1\to Q_2\to T\to 1
$$
with $Q_1$ and $Q_2$ quasi-trivial. By Shapiro's lemma and Hilbert's Theorem 90 the exact sequence induces an injection $\Sha^1_C(T)\hookrightarrow \Sha^2_C(Q_1)$. Again by Shapiro's lemma we have an isomorphism $\Sha^2_C(Q_1)\cong \oplus_i \Sha^2_{D_i}(\G)$ for a finite family of (possibly ramified) covers $D_i\to C$. But $\Sha^1_C(T)$ is a group of finite exponent $m$ (say) and the groups $\Sha^2_{D_i}(\G)$ have finite $m$-torsion by the corollary above.
\end{proof}

Next we show that, in contrast to the case of tori, the Tate--Shafarevich group associated with a finitely generated Galois module is always trivial. The proof, which works over an arbitrary Hilbertian field, is an easy variant of an argument in \cite{saiditag}.

\begin{prop} \label{h1prop} Assume $k$ is finitely generated over $\Q$ and
let $M$ be a finitely generated $\gal(\overline K|K)$-module considered as an \'etale locally constant $K$-group scheme.  Then $\Sha^1_C(M)=0$.
\end{prop}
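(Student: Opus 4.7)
My plan is to descend the class $\alpha\in\Sha^1_C(M)$ to a cohomology class on a finite Galois group, and then to use Hilbertianity of $K$ to force its vanishing.

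First I would find a finite Galois extension $L|K$ that both splits $M$ (so that $G_L:=\gal(\overline K|L)$ acts trivially on $M$) and trivialises $\alpha$ (so that $\alpha|_L=0$ in $H^1(L,M)$). Such an $L$ exists: continuity of the $G_K$-action on $M$ picks out a first finite Galois $L_0|K$ splitting $M$, and then $\alpha|_{L_0}\in\Hom_{\text{cts}}(G_{L_0},M)$ takes values in the torsion subgroup of $M$ (because $G_{L_0}$ is profinite and $M$ is finitely generated), hence factors through a further finite quotient, which after enlarging may be taken Galois over $K$. By inflation--restriction, $\alpha$ then comes from a class $\bar\alpha\in H^1(\gal(L|K),M)$.

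Geometrically, $L|K$ corresponds to a finite Galois cover $Y\to X$ of smooth projective $k$-curves, étale over some nonempty open $C'\subset C$. For a closed point $c\in C'$ with chosen preimage $c'\in Y$, the decomposition group $\bar D_{c'}\subset\gal(L|K)$ is the image of the composite $G_{K_c}\hookrightarrow G_K\twoheadrightarrow\gal(L|K)$, and the kernel of $G_{K_c}\twoheadrightarrow\bar D_{c'}$ is contained in $G_L$. Since $G_L$ acts trivially on $M$, inflation--restriction yields an injection $H^1(\bar D_{c'},M)\hookrightarrow H^1(K_c,M)$, and the local triviality $\alpha|_{K_c}=0$ then forces $\bar\alpha|_{\bar D_{c'}}=0$ in $H^1(\bar D_{c'},M)$.

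The final step is to invoke Hilbertianity. Since $k$ is finitely generated over $\Q$ and $K$ has positive transcendence degree over $\Q$, the field $K$ is Hilbertian. Applying the Hilbertian property to a primitive element of $L|K$ should produce (infinitely many) closed points $c\in C'$ whose fibre in $Y$ is a single point with residue extension of degree $[L:K]$, i.e., $\bar D_{c'}=\gal(L|K)$. For any such $c$ the conclusion $\bar\alpha|_{\bar D_{c'}}=0$ becomes $\bar\alpha=0$, and hence $\alpha=0$. The main obstacle will be this Hilbertianity step: one must invoke both the classical fact that finitely generated extensions of $\Q$ of positive transcendence degree are Hilbertian and its translation into the existence of a closed point of $C'$ at which $Y\to X$ is totally inert. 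The remaining ingredients---reduction to a finite Galois quotient, inflation--restriction, and the identification of decomposition groups---are routine.
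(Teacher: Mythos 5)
Your proposal follows essentially the same strategy as the paper's proof: descend $\alpha$ to a class on a finite Galois group $\gal(L|K)$, observe that local triviality kills its restriction to every decomposition group, and use Hilbert irreducibility to produce closed points where the decomposition group is the whole group. Only the packaging differs: the paper works with an integral model $\M$ of $M$ over $C$ and the specialization maps $H^1(C,\M)\to H^1(k(c),\M_c)$, proving injectivity of $H^1(\calo_c,\M)\to H^1(K_c,M)$ via the strict henselization, whereas you stay entirely inside the Galois cohomology of $K$ and $K_c$ and use inflation--restriction twice. Your first two steps are correct as written: a continuous homomorphism from the profinite group $\gal(\overline K|L_0)$ to the discrete finitely generated group $M$ has open kernel, hence finite image in $M_{\rm tors}$; and the kernel of $G_{K_c}\to\gal(L|K)$ does land in $\gal(\overline K|L)$, so it acts trivially on $M$ and the injectivity of inflation on $H^1$ turns $\alpha|_{K_c}=0$ into $\bar\alpha|_{\bar D_{c'}}=0$.

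The one point you must correct is the Hilbertianity step. The field whose Hilbertian property is needed is $k$, not $K$. The specialization you perform is at closed points of the $k$-curve $C'$: you specialize the coefficients of the minimal polynomial of a primitive element of $L|K$, viewed as rational functions on $C$, at points whose residue fields are finite extensions of $k$. It is the Hilbert irreducibility theorem over $k$ (Hilbertian because it is finitely generated over $\Q$) that yields infinitely many closed points of $C'$ at which this polynomial remains irreducible, equivalently at which $Y\to X$ is totally inert. Hilbertianity of $K$ is a true statement, but it concerns specializing an auxiliary variable to elements of $K$ and says nothing about closed points of $C$. With this substitution your argument is complete and matches the paper's, which explicitly notes that the proof works over an arbitrary Hilbertian base field $k$.
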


\dem Consider $\alpha \in \Sha^1_C(M)$. Shrinking $C$ if necessary we
 may assume that $M$ extends to a smooth group scheme $\M$
over $C$ and $\alpha$ comes from an element in $H^1(C,\M)$. For a closed point $c\in C$ with residue field $k(c)$
the specialization map $H^1(\calo_c,\M) \to H^1(k(c),\M_c)$
to the fibre $\M_c$ is an isomorphism.
Moreover, the
restriction map $H^1(\calo_c,\M) \to H^1(K_c,M)$ is injective. Indeed, denoting by $\calo_c^{\rm sh}$ the strict henselization of $\calo_c$ and by $K_c^{\rm sh}$ its fraction field, we have a commutative diagram
$$
\begin{CD}
0 @>>>H^1(k(c), H^0(\calo_c^{\rm sh}, \M)) @>>> H^1(\calo_c,\M) @>>> H^1(\calo_c^{\rm sh}, \M) \\
&& @VVV @VVV  \\
0 @>>>H^1(k(c), H^0(K_c^{\rm sh}, \M)) @>>> H^1(K_c,\M)
\end{CD}
$$
whose exact rows come from the Hochschild--Serre spectral sequence. Here the first vertical map is an isomorphism because $\M$ becomes constant on $\calo_c^{\rm sh}$, and the group $H^1(\calo_c^{\rm sh}, \M)$ vanishes because moreover $\calo_c^{\rm sh}$ is simply connected. This justifies the claimed injectivity,
so our $\alpha$ comes from an element in the kernel of the  specialisation maps $s_c : H^1(C,\M) \to H^1(k(c),\M_c)$ for $c\in C$.

Next, we find a finite Galois extension with group $\Gamma$ such that $\alpha$ comes from $H^1(\Gamma, M^{\gal(\ov K|L)})$. Let $D$ be the normalization of $C$ in $L$.
Shrinking again $C$ if necessary,
we can assume that the covering $D\to C$ is \'etale and that $\alpha$ comes from the subgroup $H^1(\Gamma,\M(D))\subset H^1(C,\M)$.
For $c \in C$ let
$\Gamma_c\subset \Gamma$ be the decomposition subgroup of the covering $D/C$ at $c$.
The restriction of $s_c$ to $H^1(\Gamma,\M(D))$ is given by the restriction
map $H^1(\Gamma,\M(D)) \to H^1(\Gamma_c,\M(D))$, where the target can be identified with a subgroup of $H^1(k(c),\M_c)$ because $D\to C$ is an \'etale covering.
Now by the Hilbert irreducibility theorem  there are infinitely many closed points $c\in C$ such that  the fibre of $D\to C$
at $c$ consists of a single closed point and $\Gamma_c=\Gamma$. Thus the restriction map
$H^1(\Gamma,\M) \to H^1(\Gamma_c,\M)$ is an isomorphism and $\alpha=0$.
\enddem

We now make the following observation. In the proof of Theorem \ref{main} (b) we have in fact proven the following stronger \mbox{statement:} if $k$ and $G$ are as in the theorem, $C\subset X$ is such that $G$ extends to a smooth group scheme $\calg$ over $C$ and $c\in C$ is a rational point, then the kernel of the restriction map $H^1(C,\calg)\to H^1(K_c,G)$ has finite image in $H^1(K, G)$. We now show by way of an example that this stronger statement may fail already if $G$ is a non-constant torus over a number field. We shall need the easy lemma:

\begin{lem}
Let $V$ be a smooth and geometrically integral variety over a number field
$k$, with function field $K$. For a $k$-torus $T$ the kernel of the
restriction map
$H^1(k,T) \to H^1(K,T)$
is finite.
\end{lem}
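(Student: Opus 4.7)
The plan is to show that the kernel lies inside a Tate--Shafarevich-type group $\Sha^S(T)$ of the kind considered in Lemma~\ref{pt} for a suitable finite set $S$ of places of $k$, and then to conclude by combining Lemma~\ref{pt} with the fact that $H^1(k,T)$ has bounded exponent.

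First, using the Lang--Weil estimates together with Hensel's lemma exactly as in the proof of Lemma~\ref{lemmt}, I would produce a finite set $S$ of places of $k$ containing all archimedean places such that $V(k_v)\neq\emptyset$ for every $v\notin S$. For each such $v$, picking a point $P\in V(k_v)$ and using smoothness of $V$ at $P$, the completion of $\calo_{V_{k_v},P}$ is isomorphic to $k_v[[t_1,\ldots,t_n]]$ with $n=\dim V$, and its fraction field embeds into the iterated Laurent series field $F_v:=k_v((t_1))\cdots((t_n))$. In this way both $K$ and $k_v$ map compatibly to $F_v$.

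The main obstacle will be showing that $H^1(k_v,T)\hookrightarrow H^1(F_v,T)$. My plan is to iterate a one-variable statement: setting $F_0:=k_v$ and $F_i:=F_{i-1}((t_i))$, each $F_i$ is the fraction field of the complete discrete valuation ring $\calo_i:=F_{i-1}[[t_i]]$ with residue field $F_{i-1}$, so that $H^1(\calo_i,T)\cong H^1(F_{i-1},T)$ by smooth base change and $H^1(\calo_i,T)\hookrightarrow H^1(F_i,T)$ by Nisnevich's theorem \cite{nis} (as already invoked in the proof of Theorem~\ref{main}). Composing $n$ times will yield the required injection, and consequently any $\alpha$ in the kernel will satisfy $\alpha|_{k_v}=0$ for every $v\notin S$, i.e. $\alpha\in\Sha^S(T)$.

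To finish, since $T$ is split by some finite Galois extension $L|k$, a standard restriction--corestriction argument shows that $H^1(k,T)$ is annihilated by $m:=[L:k]$. Hence $\Sha^S(T)={}_m\Sha^S(T)$, which is finite by Lemma~\ref{pt}.
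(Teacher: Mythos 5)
Your proposal is correct, and its global skeleton (Lang--Weil plus Hensel to produce $S$, show the kernel lands in $\Sha^S(T)$, then conclude from Lemma \ref{pt} together with the bounded exponent of $H^1(k,T)$) coincides with the paper's. Where you genuinely diverge is in the key local step, namely showing that an element $\alpha$ of the kernel dies in $H^1(k_v,T)$ for $v\notin S$. The paper stays at the level of the variety: it spreads $\alpha$ out so that it vanishes in $H^1(U,T)$ for some nonempty open $U\subset V$, notes that $U(k_v)\neq\emptyset$ by the implicit function theorem, and evaluates at a $k_v$-point of $U$. You instead never leave the generic fibre: you embed $K$ into the iterated Laurent series field $F_v=k_v((t_1))\cdots((t_n))$ via the completed local ring at a $k_v$-point and reduce to the injectivity of $H^1(k_v,T)\to H^1(F_v,T)$, which you obtain by iterating the specialization isomorphism $H^1(F_{i-1}[[t_i]],T)\cong H^1(F_{i-1},T)$ (valid since the ring is henselian and $T$ is smooth --- ``smooth base change'' is a slight misnomer but the fact is the right one) with Nisnevich's injectivity over a discrete valuation ring. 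This works: the two maps $k\to k_v\to F_v$ and $k\to K\to F_v$ are indeed compatible, and the fraction field of $k_v[[t_1,\dots,t_n]]$ does embed into $F_v$. The trade-off is that the paper's evaluation argument is shorter and uses only the implicit function theorem, whereas your route invokes the heavier injectivity theorem of Nisnevich $n$ times; on the other hand your argument makes no use of the spreading-out of $\alpha$ over an open subscheme and isolates cleanly the statement that $H^1(k_v,T)$ injects into $H^1$ of a higher-dimensional local field over $k_v$, which is of independent interest.
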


\dem By the Lang--Weil estimates and Hensel's lemma we find a finite
set of places $S$ of $k$ such that $V(k_v) \neq \emptyset$ for all
$v \not \in S$. With notation as in Lemma \ref{lemmt}, we show that the kernel of $H^1(k,T) \to H^1(K,T)$ is contained in the group $\Sha^S(T)$, which is known to be finite (see Lemma \ref{pt}). Take $\alpha \in \ker (H^1(k,T) \to H^1(K,T))$. We find a nonempty open subscheme $U\subset V$ such that the restriction of $\alpha$ to
$H^1(U,T)$ is zero. For $v \not \in S$ we have $U(k_v) \neq  \emptyset$
by the implicit function
theorem, which implies that the
restriction of $\alpha$ to $H^1(k_v,T)$ of $\alpha$ is also zero, as
claimed.
\enddem

We now come to the promised example.

\begin{ex}\label{exinfinite}\rm
Let $k$ be a number field and $D\to C$ a finite \'etale Galois covering of smooth and geometrically
integral $k$-curves, inducing a field extension $L\supset K$ on generic fibres. Assume that $C$ contains a rational point $c \in C(k)$ such that
the fibre of $D\to C$ at $c$ consists of a single closed point $d\in D$
with residue field $l \supset k$. Fix a $k$-torus $T_k$ such that the restriction map $H^1(k,T_k) \to H^1(l,T_k)$ has infinite kernel. For instance, one may take $T_k$ to be the norm torus $\R^1_{l/k} \G$ as $H^1(k,T_k)\cong k^*/N_{l/k} (l^*)$ is known to be infinite by (\cite{dhbook}, ex. 15.1) and $H^1(l,T_k)=0$. Finally, consider the $L$-torus $T_L:=T_k \times_k L$,
denote by $T$ the $K$-torus $T:=\R_{L/K}(T_L)$ obtained by Weil restriction and extend it to a $C$-torus $\T$.

We now show that the kernel of the map $H^1(C,\T)\to H^1(K_c,T)$ has infinite image in $H^1(K,T)$ for every $c\in C(k)$ satisfying the above condition.  Denoting by $\Sigma$ the complement of $C$ in $X$, we may identify the image of $H^1(C,\T)$ in $H^1(K,T)$ with the subgroup $H^1_{\Sigma}(K,T)\subset H^1(K,T)$ consisting of elements unramified outside $\Sigma$.
Let $\Sigma_D$ be the set of closed points of $D$ lying over
a point of $\Sigma$. By basic properties of the Weil restriction we have isomorphisms
$H^1_{\Sigma}(K,T) \simeq H^1_{\Sigma_D}(L,T_L)$ and $H^1(K_c,T)\cong H^1(L_d,T_L)$.
Hence as before it will suffice to check that the kernel of the specialization map $H^1_{\Sigma_D}(L,T_L) \to H^1(l,T_k)$ at $d$ is infinite.
Now $H^1_{\Sigma_D}(L,T_L)$ contains the image of the natural map $H^1(k,T_k)\to H^1(L,T_L)$ which has finite kernel by the above lemma applied to the geometrically integral $k$-curve $D$. On the other hand, the composite map $H^1(k,T_k)\to H^1_{\Sigma_D}(L,T_L)\to H^1(l,T_k)$ is the natural restriction and therefore has infinite kernel by our assumption. The claim follows.
\end{ex}

Finally, we show that if instead of a torus we consider a finite \'etale group scheme, there is no such counterexample, even over an arbitrary field of characteristic 0.

\begin{prop} Assume $k$ is a field of characteristic 0, and $\calf$ is a finite \'etale group scheme over $C$ with generic fibre $F$ over $K$. For every closed point $c\in C$ the kernel of the restriction map
$$
H^1(C,\calf)\to H^1(K_c,F)
$$
is finite.
\end{prop}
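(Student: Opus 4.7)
My plan is to reduce the claim in several steps to showing finiteness of a $\Hom$ group with \'etale fundamental groups.

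First, I would show that $H^1(\calo_c, \calf) \to H^1(K_c, F)$ is injective: by inflation-restriction for the extension $1 \to I_c \to \gal(\ov{K_c}/K_c) \to \gal(K_c^{\nr}/K_c) \to 1$, using that inertia acts trivially on $F$ (since $\calf$ is \'etale over $\calo_c$), the inflation map $H^1(\calo_c, \calf) = H^1(K_c^{\nr}/K_c, F) \hookrightarrow H^1(K_c, F)$ is injective. Combined with the standard smooth-specialization isomorphism $H^1(\calo_c, \calf) \cong H^1(k(c), \calf_c)$ used already in Proposition~\ref{h1prop}, this reduces the claim to finiteness of $\ker s_c$, where $s_c : H^1(C, \calf) \to H^1(k(c), \calf_c)$ denotes the specialization map at $c$.

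Next, I would choose a finite \'etale Galois cover $\pi : D \to C$ with group $\Gamma$ trivializing $\calf$, so that $\pi^*\calf \cong (F_0)_D$ as a constant group scheme on $D$ for some finite abelian group $F_0$; this cover is obtained by taking the kernel of the $\pi_1(C)$-representation associated with $\calf$. By Hochschild--Serre, the kernel of $\pi^* : H^1(C, \calf) \to H^1(D, (F_0)_D)$ lies in the finite group $H^1(\Gamma, F_0)$, while compatibility of specialization puts $\pi^*(\ker s_c)$ inside $\ker s_d$ for any closed point $d \in D$ over $c$. So it suffices to prove finiteness of $\ker s_d$. Letting $k_1$ be the field of constants of $D$ and choosing a finite Galois extension $L$ of $k_1$ containing $k(d)$, the base change $D_L := D \times_{k_1} L$ is geometrically integral over $L$ with an $L$-rational point $d_L$ above $d$ (from any factor of $k(d) \otimes_{k_1} L$). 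Since the kernels of $H^1(D, F_0) \to H^1(D_L, F_0)$ and of $H^1(k(d), F_0) \to H^1(L, F_0)$ are finite (finite group cohomology with finite coefficients), finiteness of $\ker s_d$ reduces to finiteness of $\ker s_{d_L}$.

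Finally, the rational point $d_L$ furnishes a splitting of the fundamental exact sequence $1 \to \pi_1(\ov{D_L}) \to \pi_1(D_L) \to \gal(\ov L/L) \to 1$. Via the identification $H^1(D_L, F_0) = \Hom_{\rm cts}(\pi_1(D_L), F_0)$ the map $s_{d_L}$ becomes restriction along this splitting, so $\ker s_{d_L}$ identifies with $\Hom_{\gal(\ov L/L)}(\pi_1(\ov{D_L}), F_0)$, a finite set because $\pi_1(\ov{D_L})$ is topologically finitely generated in characteristic zero and $F_0$ is finite.

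The main difficulty is producing a rational point above $c$: this forces enlarging both the curve (to $D$) and the base field (to $L$), but once such a point is available the section of the fundamental group sequence that it supplies is exactly strong enough to control the geometric kernel.
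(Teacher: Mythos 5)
Your argument is correct, and while it shares the paper's opening reductions (passing from the restriction map to the specialization map via injectivity of $H^1(\calo_c,\calf)\to H^1(K_c,F)$, then invoking Hochschild--Serre for a finite Galois \'etale cover trivializing $\calf$), the finiteness input at its core is genuinely different. The paper reduces to the case $\calf=\mu_n$ and argues through the Kummer sequence: finiteness of ${}_n\pic C$ (coming from finiteness of the $n$-torsion of the Jacobian over $\kbar$) together with finite generation of $k[C]^*/k^*$ brings the problem down to the kernel of $k^*/k^{*n}\to k_1^*/k_1^{*n}$, which is handled by inflation--restriction. You instead enlarge the constant field so that the trivializing cover acquires a rational point above $c$, and then exploit the resulting section of $\pi_1(D_L)\to\gal(\ov{L}|L)$ to identify the kernel of specialization with a subgroup of $\Hom_{\rm cts}(\pi_1(\ov{D_L}),F_0)$, finite because the geometric fundamental group of a curve in characteristic zero is topologically finitely generated. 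Your route is more uniform -- all constant coefficients $F_0$ are treated at once, with no reduction to $\mu_n$ -- and it isolates the single geometric fact doing the work; the price is the extra base change needed to produce a rational point, which you correctly control by finiteness of group cohomology of finite groups with finite coefficients. The paper's route stays over the given fields throughout and needs no rational point, at the cost of a more computational Kummer-theoretic case analysis. Both proofs are valid over an arbitrary field of characteristic $0$.
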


\dem As in the proof of Proposition \ref{h1prop}, it is equivalent to study the kernel of the specialization map $H^1(C,\calf)\to H^1(k(c),\calf_c)$, where $\calf_c$ is the fibre of $\calf$ at $c$.

 We start with the case $\calf=\mu_n$. The $n$-torsion subgroup of $\pic X$ is finite, being contained in the finite $n$-torsion subgroup of the abelian variety $\pic^0 (X\times_k\kbar)$. Therefore the $n$-torsion subgroup
$_n \pic C\subset \pic C$ is also finite because $\pic C$ is the quotient of
$\pic X$ by a finitely generated group (a quotient of the group of divisors supported in $X\setminus C)$.

Using the commutative diagram with exact row
$$
\begin{CD}
0 @>>> k[C]^*/k[C]^{*^n} @>>> H^1(C,\mu_n) @>>> _n \pic C  @>>> 0\\
&& @VVV @VVV  \\
&& k(c)^*/k(c)^{*^n} @>\cong>> H^1(k(c),\mu_n)
\end{CD}
$$
it is therefore sufficient to show finiteness of the kernel of the left vertical map.
In the exact sequence
$$0 \to k^* \to k[C]^* \stackrel{{\rm Div}_{X\setminus C}}{\longrightarrow} P_{X\setminus C} \to 0$$
the group $P_{X\setminus C}$ of principal divisors supported in $X\setminus C$ is finitely generated, and therefore $k^*/k^{*n}$ is of finite index in  $k[C]^*/k[C]^{*n}$. Thus we may replace the latter with the former. Furthermore, after replacing $k(c)$ by its Galois closure $k_1$ it is enough to show finiteness of the kernel of the map $k^*/k^{*n}\to k_1^*/k_1^{*n}$, which by Kummer theory and the restriction-inflation sequence identifies with the finite kernel $H^1(\gal(k_1/k),\mu_n)$ of the restriction map
$H^1(k,\mu_n)\to H^1(k_1,\mu_n)$.

In the general case choose a geometrically integral,
finite, Galois \'etale covering $D$ of
$C$ such that $\calf_D:=\calf \times_C D$ is isomorphic to a direct
sum of copies of $\mu_{n_i}$ for various positive integers $n_i$. Set $G=\gal(D/C)$, and let $d \in D$ be a closed point of $D$ lying over $c$.
The Hochschild-Serre
spectral sequence gives the exact row in the commutative diagram
$$
\begin{CD}
0 @>>> H^1 (G,\calf(D)) @>>> H^1(C,\calf) @>>> H^1(D,\calf_D)\\
&&&& @VVV @VVV \\
&&&& H^1(k(c),\calf_c) @>>> H^1(D,\calf_d)
\end{CD}
$$
Here $H^1 (G,\calf(D))$ is finite because both $G$ and $\calf(D)$ are finite.
The kernel of the right vertical map is finite by the case $\calf=\mu_n$, and we are done.
\enddem


\begin{thebibliography}{99}

\bibitem{requiv} J-L. Colliot-Th\'el\`ene, J-J. Sansuc,
La R-\'equivalence sur les tores, {\it Ann. Sci. \'Ecole Norm. Sup.} {\bf 10} (1977),
175--229.

\bibitem{ctskobook} J-L. Colliot-Th\'el\`ene, A. N. Skorobogatov, {\em The Brauer--Grothendieck group}, Springer-Verlag, Cham, 2021.

\bibitem{dhduke} D. Harari, M\'ethode des fibrations et obstruction
de Manin, {\em Duke Math. J. } {\bf 75} (1994), 221--260.

\bibitem{dhbook} \bysame, {\em Galois cohomology and class field theory}, Springer-Verlag--EDP Sciences, 2020.

\bibitem{hasza} D. Harari, T. Szamuely, Arithmetic duality theorems for
  1-motives, {\em J. reine angew. Math.} {\bf 578} (2005), 93--128.

\bibitem{licht} S. Lichtenbaum,  Duality theorems for curves over $p$-adic fields, {\em Invent. Math.} {\bf  7} (1969), 120--136.

\bibitem{adt} J. S. Milne: {\it Arithmetic Duality Theorems},
Second edition, BookSurge, LLC, Charleston, SC, 2006.

\bibitem{nis} Ye. A. Nisnevich, Espaces homog\`enes principaux rationnellement triviaux et
arithm\'etique des sch\'emas en groupes r\'eductifs sur les anneaux de Dedekind, {\em C. R. Acad.
Sci. Paris} 299 (1984) 5--8.

\bibitem{rr} A. and I. Rapinchuk, Some finiteness results for algebraic groups and unramified cohomology over higher-dimensional fields, arXiv preprint {\tt arXiv:2002.01520}, 2020.


\bibitem{saiditag} M. Sa\"idi, A. Tamagawa, On the arithmetic of abelian
varieties,
{\em J. reine angew. Math.} 762 (2020), 1--33.

\bibitem{sansuc} J.-J. Sansuc, Groupe de Brauer et arithm\'etique
des groupes alg\'ebriques lin\'eaires sur un corps de nombres, {\em J. reine angew. Math.}
{\bf 327} (1981), 12--80.

\bibitem{schvh} C. Scheiderer, J. van Hamel,
Cohomology of tori over $p$--adic curves, {\em Math. Ann.} {\bf 326} (2003), 155--183.

\bibitem{vosk1} V. E.  Voskresenski{\u \i}, Birational properties of linear algebraic groups, {\em Izv. Akad. Nauk SSSR Ser. Mat.}, {\bf 34} (1970), 3--19.

\end{thebibliography}
\end{document}